\newcommand{\assign}{:=}
\newcommand{\nocomma}{}
\newcommand{\noplus}{}
\newcommand{\tmem}[1]{{\em #1\/}}
\newcommand{\tmop}[1]{\ensuremath{\operatorname{#1}}}
\newenvironment{proof}{\noindent\textbf{Proof\ }}{\hspace*{\fill}$\Box$\medskip}
\newtheorem{corollary}{Corollary}
\newtheorem{lemma}{Lemma}
{\theorembodyfont{\rmfamily}\newtheorem{remark}{Remark}}
\newtheorem{theorem}{Theorem}
\newcommand{\XXint}[3]{{\setbox}0=\text{\ensuremath{#1 #2 #3 \int}}
{\vcenter{\text{\ensuremath{#2 #3}}}}{\kern}-.5{\tmwd}0}
\newcommand{\opn}[2]{\newcommand{\1}{\}} {\opn}{\Rm{Rm}} {\opn}{\Ric{Ric}}
{\opn}{\Rc{Rc}} {\opn}{\Scal{Sc}} {\opn}{\Tr{Tr}} {\opn}{\Trac{Tr}}
{\opn}detdet {\opn}{\diam{diam}} {\opn}{\dist{dist}} {\opn}{\Im}Im
{\opn}{\div}div {\opn}{\Ker{Ker}} {\opn}expexp {\opn}{\Vol{Vol}}
{\opn}{\exph{exph}} {\opn}{\Herm{Herm}} {\opn}{\End{End}} {\opn}{\Hess{Hess}}
{\opn}{\Vol{Vol}}}
\newcommand{\R}{\mathbb{R}}
\newcommand{\contract}{{\kern}-1.5pt{\vrule} width6.0pt height0.4pt depth0pt
{\vrule} width0.4pt height4.0pt depth0pt}
\newcommand{\retract}{{\kern}-1.5pt{\vrule} width0.4pt height4.0pt depth0pt
{\vrule} width6.0pt height0.4pt depth0pt}
\newcommand{\Openbox}{{\leavevmode} {\hfil}{\vrule} width{\boxrulethickness}
{\vbox} to{\Openboxwidth{{\advance}{\Openboxwidth} -2{\boxrulethickness}
{\hrule} height {\boxrulethickness} width{\Openboxwidth}{\vfil} {\hrule}
height{\boxrulethickness}}}{\vrule} width{\boxrulethickness}{\hfil} }
\begin{document}

\title{On complex deformations of K\"ahler-Ricci solitons}\author{\\
{{NEFTON PALI}}}\date{}\maketitle

\begin{abstract}
  We obtain a formal obstruction, i.e. a necessary condition for the existence of polarised
  complex deformations of K\"ahler-Ricci solitons. This obstruction is expressed in terms of the
  harmonic part of the variation of the complex structure.
\end{abstract}

\section{The obstruction result}

Despite the remarkable work of Podesta-Spiro, \cite{Po-Sp}, not much is known on the existence of complex
deformations of K\"ahler-Ricci solitons. In this paper, we provide an effective
result on this topic. Namely, given any polarised family of complex
deformations over a K\"ahler-Ricci soliton (polarised by the symplectic form
of the initial K\"ahler-Ricci soliton), we can effectively establish a necessary
condition for this family to exist.

Let $\left( X, J, g, \omega \right)$ be a Fano manifold with $\omega =
\tmop{Ric}_{_J} \left( \Omega \right)$, where $\Omega > 0$ is the unique volume form such that  $\int_X \Omega = 1$. 
(We denote by $\tmop{Ric}_{_J} \left( \Omega \right)$ the Chern-Ricci form associated to the volume form $\Omega$).
We introduce the $\Omega$-divergence operator acting on vector fields
$\xi$ as
\begin{eqnarray*}
  \tmop{div}^{\Omega} \xi & \assign & \frac{d (\xi \neg \Omega)}{\Omega} .
\end{eqnarray*}
It is well known (see \cite{Fut}), that the Lie algebra of
$J$-holomorphic vector fields $H^0(X,T^{1,0}_{X,J})$ identifies with the space of complex valued functions
$$
\Lambda^{\Omega}_{g, J}:=\overline{-\tmop{div}^{\Omega}H^0(X,T^{1,0}_{X,J})}\subset C_{\Omega}^{\infty} (X, \mathbbm{C})_0 ,
$$
where
$C_{\Omega}^{\infty} (X, \mathbbm{C})_0$ is the space of smooth complex valued
functions with vanishing integral with respect to $\Omega$.
We denote by
$\mathcal{H}_{g, \Omega}^{0, 1} \left( T_{X, J} \right)$ the space of
$T_X$-valued $\left( 0, 1 \right)$-forms which are harmonic with respect to
the Hodge-Witten Laplacian determined by the volume form $\Omega$. 

Assume now $\left( X, J, g, \omega \right)$ is a compact K\"ahler-Ricci soliton
and consider the functions $f \assign \log \frac{d V_g}{\Omega}$, $F \assign f - \int_X f
\Omega$. The solution of the variational stability problem in \cite{Pal2} shows that
the vanishing harmonic cone
\begin{eqnarray*}
  \mathcal{H}_{g, \Omega}^{0, 1} \left( T_{X, J} \right)_0 & \assign & \left\{
  A \in \mathcal{H}_{g, \Omega}^{0, 1} \left( T_{X, J} \right) \mid \int_X
  \left| A \right|^2_g F\, \Omega = 0 \right\},
\end{eqnarray*}
is relevant for the deformation theory of compact K\"ahler-Ricci solitons. In
the Dancer-Wang K\"ahler-Ricci soliton case $\mathcal{H}_{g, \Omega}^{0, 1}
\left( T_{X, J} \right)_0 \neq \{0\}$, thanks to a result in \cite{Ha-Mu}.

For any $A \in \mathcal{H}_{g, \Omega}^{0, 1} \left( T_{X, J} \right)$ we
define the $\mathbbm{R}$-linear functional
\begin{eqnarray*}
  \Phi_A : \Lambda^{\Omega}_{g, J} & \longrightarrow & \mathbbm{R},\\
  &  & \\
  \Phi_A  \left( u \right) & \assign & \int_X \left[ 2 \tmop{Re} u
  \left\langle \nabla^2_g f, A^2 \right\rangle_g - \left\langle J \nabla_g f
  \neg \nabla_g A, i \,\overline{u} \times_{_J} A_{_{_{_{}}}}
  \right\rangle_{g_{_{}}} \right] \Omega .
\end{eqnarray*}
With these notations we can state our obstruction result.

\begin{theorem}
  \label{main-teo}Let $\left( X, J, g, \omega \right)$ be a compact
  K\"ahler-Ricci soliton, let $\left( J_t, \omega \right)_{t \in \left( -
  \varepsilon, \varepsilon \right)}$ be a smooth family of K\"ahler-Ricci
  solitons with $J_0 = J$ and let $A \in \mathcal{H}_{g, \Omega}^{0, 1} \left(
  T_{X, J} \right)$ be the harmonic part of the variation $\dot{J}_0$. Then $A
  \in \mathcal{H}_{g, \Omega}^{0, 1} \left( T_{X, J} \right)_0$ and $\Phi_A =
  0$.
\end{theorem}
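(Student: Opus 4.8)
The plan is to read the whole family as the vanishing of a single nonlinear soliton operator and to extract the two conclusions as solvability (cokernel) conditions at first and second order. First I would write the variation $\dot J_0\in\tmop{End}(T_X)$ in its relevant piece, the $T^{1,0}_{X,J}$-valued $(0,1)$-form $\mathcal A$ that anticommutes with $J$. Differentiating the integrability (Nijenhuis/Maurer--Cartan) condition of the $J_t$ at $t=0$ yields $\bar\partial\mathcal A=0$, so the Hodge--Witten decomposition with respect to $\Omega$ reads $\mathcal A=A+\bar\partial\eta$, with $A\in\mathcal H_{g,\Omega}^{0,1}(T_{X,J})$ the harmonic part and $\bar\partial\eta$ a pure gauge term. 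Since the soliton operator is equivariant under the pseudogroup of biholomorphisms, the exact part $\bar\partial\eta$ can be removed by a $t$-dependent reparametrization, so that $A$ is the genuinely obstructed datum; the same equivariance is what makes the holomorphy potentials $\Lambda^\Omega_{g,J}$ (together with the constant, i.e. the $F$-direction) the relevant obstruction space against which solvability must be tested.

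Next I would differentiate the soliton condition. The cleanest encoding is $\mathcal S(J_t):=\bar\partial_{J_t}\big(\nabla^{1,0}_{g_t}f_t\big)=0$, the holomorphicity of the soliton gradient field, where $g_t=\omega(\cdot,J_t\cdot)$, where $\Omega_t$ is normalized by $\tmop{Ric}_{J_t}(\Omega_t)=\omega$ and $\int_X\Omega_t=1$, and $f_t=\log\frac{dV_{g_t}}{\Omega_t}$; the identity $\tmop{Ric}(g_t)-\omega=-i\partial_{J_t}\bar\partial_{J_t}f_t$ then holds automatically and ties $f_t$ to the curvature. The first derivative $D\mathcal S_J(\dot J_0)=0$ has, modulo gauge, exactly $\mathcal H_{g,\Omega}^{0,1}(T_{X,J})$ as its solution space, consistent with $A$ being harmonic. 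Differentiating once more gives $D\mathcal S_J(\ddot J_0)=-D^2\mathcal S_J(\dot J_0,\dot J_0)$, and since both claimed obstructions are quadratic in $A$, this second order is precisely where they must surface.

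The two conclusions then come from testing this second-order identity. Pairing with the constant function, i.e. integrating against $\Omega$, and using the scalar soliton identities satisfied by $f$ and $F$ should collapse the quadratic source term to $\int_X|A|_g^2F\,\Omega$, forcing it to vanish and giving $A\in\mathcal H_{g,\Omega}^{0,1}(T_{X,J})_0$. Pairing instead with a nonconstant holomorphy potential $u\in\Lambda^\Omega_{g,J}$, equivalently contracting with the background-holomorphic field $\xi$ determined by $-\tmop{div}^\Omega\xi=u$, and integrating by parts with the help of $\bar\partial A=0$, the Witten-adjoint condition $\bar\partial^{*}_\Omega A=0$, and $\bar\partial\nabla^{1,0}_gf=0$, should reorganize the surviving quadratic expression into $\int_X\big[2\tmop{Re}u\,\langle\nabla^2_gf,A^2\rangle_g-\langle J\nabla_gf\neg\nabla_gA,\,i\,\overline u\times_{_J}A\rangle_g\big]\Omega$, that is, exactly $\Phi_A(u)$; hence $\Phi_A=0$.

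The main obstacle will be the explicit second variation of the Chern--Ricci form, and hence of $\mathcal S$, in the direction of $\dot J_0$, keeping track that $J_t$, the metric $g_t$, the volume forms $dV_{g_t}$ and $\Omega_t$, and the potential $f_t$ all move simultaneously. The delicate points are separating the genuinely exact (gauge) contributions, which must drop, from those that survive, and carrying out the $f$-weighted integrations by parts so that the survivors assemble with the precise constants and with the cross term involving $\neg$ and $\times_{_J}$. It is the holomorphicity of the soliton field $\nabla_gf$, through its commutation with $\bar\partial$, together with the harmonicity of $A$, that I expect to make the exact parts cancel and force the identity to close onto $\Phi_A(u)$ and onto $\int_X|A|_g^2F\,\Omega$.
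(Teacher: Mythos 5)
Your overall strategy --- read the family as the vanishing of a soliton operator, differentiate twice, and test the second-order identity against the holomorphy potentials $\Lambda^{\Omega}_{g,J}$ as the obstruction space --- is indeed the philosophy of the paper, which works with the scalar map $(g_t,\Omega_t)\mapsto \underline{H}_{g_t,\Omega_t}$ (whose vanishing characterizes solitons) and projects $\ddot{\underline{H}}=0$ onto $\tmop{Re}\Lambda^{\Omega}_{g,J}$ via the decomposition (\ref{dec-P-op}). But as written your plan has concrete gaps. First, the reduction to $\dot J_0$ harmonic is not a soft equivariance statement: the gauge group in the polarised setting is $\tmop{Symp}^0(X,\omega)$, not biholomorphisms, and a general $\overline{\partial}$-exact piece of $\dot J_0$ cannot be removed by symplectomorphisms. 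What makes it removable in the paper is the first-order information itself: $D_{g,\Omega}\underline{H}(\dot g_0,\dot\Omega_0)=0$ forces the potential $\theta$ of the exact part to lie in $\mathbbm{O}^J_{g,\Omega}$, and only then does the isomorphism $\tau$ of Remark \ref{tau-rmk} produce a real Hamiltonian $u$ with $\theta-iu\in\Lambda^{\Omega}_{g,J}$, whose flow kills the exact part. Second, your route to $A\in\mathcal{H}^{0,1}_{g,\Omega}(T_{X,J})_0$ --- pairing the second-order identity with the constant function --- does not work: in the paper's final expression for $2\frac{d^2}{dt^2}\underline{H}$ the integral against $\Omega$ is identically zero (the term $2\int_X|A|^2F\,\Omega$ is exactly cancelled by the integral of $4\langle\nabla_g^2f,A^2\rangle_g-2\langle J\nabla_gf\neg\nabla_gA,JA\rangle_g$, by the identity stated after Theorem \ref{main-teo}), so the constant direction yields a tautology. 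The paper does not prove this part here at all; it is imported from the variational stability analysis in [Pal2].

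Third, the step you defer as ``the main obstacle'' contains the actual mathematical content, and one piece of it is not merely computational: the second derivative $\frac{d}{dt}\big|_{t=0}\dot g^*_t$ enters the linearized equation, and one must show that, after correcting by the Maurer--Cartan quadratic term $\nabla_g^{\ast_\Omega}(\Delta^{\Omega,-J}_{T_X,g})^{-1}(\dot J_0\neg\nabla^{1,0}_{g,J}\dot J_0)$, it decomposes as $\overline{\partial}_{T_{X,J}}\nabla_{g,J}\overline{\psi}+A_1$ with $\psi\in\Lambda^{\Omega,\bot}_{g,J}$ (Lemma \ref{sec-harm-var}, resting on the symmetry property of Corollary \ref{fund-cx-def-sm} and identity (\ref{sec-ord-Defm})); it is precisely the membership $\psi\in\Lambda^{\Omega,\bot}_{g,J}$ that makes the $\ddot J_0$-contribution $P^{\Omega}_{g,J}\tmop{Re}\psi$ die under the projection $\pi_2$, leaving only the quadratic expression in $A$ that becomes $\Phi_A(u)$. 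Your sketch does not identify this mechanism, and your choice of soliton operator $\overline{\partial}_{J_t}(\nabla^{1,0}_{g_t}f_t)$ (vector-valued rather than scalar) would require a separate identification of its cokernel and of the correct pairing with $\xi$; it is not clear that the bookkeeping closes onto $\Phi_A$ without redoing the equivalent of the paper's Sections 3--6.
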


The fact that $A \in \mathcal{H}_{g, \Omega}^{0, 1} \left( T_{X, J} \right)_0$
is a statement in our previous work \cite{Pal2}. We will show also that for any $A \in
\mathcal{H}_{g, \Omega}^{0, 1} \left( T_{X, J} \right)$ holds the identity
\begin{eqnarray*}
  \int_X \left| A \right|_g^2 F \Omega & = & - \int_X \left[ 2 \left\langle
  \nabla^2_g f, A^2 \right\rangle_g - \left\langle J \nabla_g f \neg \nabla_g
  A, J A_{_{_{_{}}}} \right\rangle_{g_{_{}}} \right] \Omega,
\end{eqnarray*}
whose right-hand side shows some similarity with the integral $\Phi_A \left( u
\right)$.

\section{Properties of the first variation of Perelman's $H$ map}

We need to remind a few basic facts proved in \cite{Pal2}. We first remind some of the notations in \cite{Pal2}. Let $\Omega > 0$ be a
smooth volume form over an oriented compact and connected Riemannian manifold
$(X, g)$. We equip the set of smooth Riemannian metrics $\mathcal{M}$ over $X$ with the
scalar product
\begin{equation}
  \label{Glb-Rm-m}  (u, v) \longmapsto \int_X \left\langle \hspace{0.25em} u,
  v \right\rangle_g \Omega,
\end{equation}
for all $u, v \in L^2 (X, S^2_{\mathbbm{R}} T_X^{\ast})$. Let $P_g^{\ast}$ be
the formal adjoint of some operator $P$ with respect to the metric $g$. We
observe that the operator $P^{\ast_{\Omega}}_g \assign e^f P^{\ast}_g  \left(
e^{- f} \bullet \right)$, with $f \assign \log \frac{d V_g}{\Omega}$ , is the
formal adjoint of $P$ with respect to the scalar product (\ref{Glb-Rm-m}). We
define the real weighted Laplacian operator $\Delta^{\Omega}_g \assign
\nabla_g^{\ast_{\Omega}} \nabla_g$.
We notice in particular the identity $\tmop{div}^{\Omega} \nabla_g u = -
\Delta^{\Omega}_g u$, for all functions $u$. 

Over a Fano manifold $\left(
X, J, g, \omega \right)$, 
with $\omega =
\tmop{Ric}_{_J} \left( \Omega \right)$, $\int_X \Omega = 1$.
we define the linear operator $B^{\Omega}_{g, J}$
acting on smooth complex valued functions $u$ as $B^{\Omega}_{g, J} u \assign
\tmop{div}^{\Omega} (J \nabla_g u)$. This is a first order differential
operator. Indeed
\begin{eqnarray*}
  B^{\Omega}_{g, J} u & = & \tmop{Tr}_{\mathbbm{R}} \left( J \nabla^2_g u
  \right) - d f \cdot J \nabla_g u\\
  &  & \\
  & = & g (\nabla_g u, J \nabla_g f),
\end{eqnarray*}
since $J$ is $g$-anti-symmetric. We define the weighted complex Laplacian
operator $\Delta^{\Omega}_{g, J} \assign \Delta^{\Omega}_g - i B^{\Omega}_{J,
g}$, acting on smooth complex valued functions. We remind the identity $\Lambda^{\Omega}_{g, J} = \tmop{Ker} (\Delta^{\Omega}_{g, J} -
2\mathbbm{I})$, (see \cite{Fut}).

We remind now that the
$\Omega$-Bakry-Emery-Ricci tensor of the metric $g$ is defined by the formula
$$
\tmop{Ric}_g (\Omega) \assign \tmop{Ric} (g) \hspace{0.75em} +
\hspace{0.75em} \nabla_g d \log \frac{dV_g}{\Omega}.
$$
A Riemannian metric $g$
is called a $\Omega$-shrinking Ricci soliton if $g = \tmop{Ric}_g (\Omega)$.
We define the following fundamental objects
\begin{eqnarray*}
  h & \equiv & h_{g, \Omega} : = \tmop{Ric}_g (\Omega) - g \nocomma,\\
  &  & \\
  2 H & \equiv & 2 H_{g, \Omega} \assign - \Delta^{\Omega}_g f \noplus \noplus
  + \tmop{Tr}_g h \noplus + 2 f,
\end{eqnarray*}
with $f \assign \log \frac{d V_g}{\Omega}$. We define also the normalised
function $\underline{H} \assign H - \int_X H \Omega$. We denote by
$\mathcal{V}_1$ the space of smooth positive volume forms with unitary
integral over $X$. For any $V \in T_{\mathcal{V}_1}$, we define
$V_{\Omega}^{\ast} \assign V / \Omega$. 

We notice now that over a polarised Fano manifold $(X,\omega)$, $\omega\in
2\pi c_1(X)$, the space of
$\omega$-compatible complex structures $\mathcal{J}_{\omega}$ embeds naturally
inside $\mathcal{M} \times \mathcal{V}_1$ via the Chern-Ricci form. The image
of this embedding is
\begin{eqnarray*}
  \mathcal{S}_{\omega} & \assign & \left\{ (g, \Omega) \in
  \mathcal{M}_{\omega} \times \mathcal{V}_1 \mid \omega = \tmop{Ric}_J
  (\Omega), J = - \omega^{- 1} g \right\},
\end{eqnarray*}
with $\mathcal{M}_{\omega} \assign - \omega \cdot \mathcal{J}_{\omega} \subset
\mathcal{M}$. The fact that the space $\mathcal{J}_{\omega}$ may be singular in
general implies that also the space $\mathcal{S}_{\omega}$ may be singular. We
denote by $\tmop{TC}_{\mathcal{S}_{\omega}, (g, \Omega)}$ the tangent cone of
$\mathcal{S}_{\omega}$ at an arbitrary point $(g, \Omega) \in
\mathcal{S}_{\omega}$. This is by definition the union of all tangent vectors
of $\mathcal{S}_{\omega}$ at the point $\left( g, \Omega \right)$. We notice
that, (see for example \cite{Pal1}), the tangent cone
$\tmop{TC}_{\mathcal{M}_{\omega}, g}$ of $\mathcal{M}_{\omega}$ at an
arbitrary point $g \in \mathcal{M}_{\omega}$ satisfies the inclusion
\begin{equation}
  \tmop{TC}_{\mathcal{M}_{\omega}, g} \subseteq \mathbbm{D}^J_{g, \left[ 0
  \right]}, \label{TConeM}
\end{equation}
with
\begin{eqnarray*}
  \mathbbm{D}^J_{g, [0]} & \assign & \{ v \in C^{\infty} \left( X,
  S_{\mathbbm{R}}^2 T^{\ast}_X \right)_{_{_{_{_{_{}}}}}} \mid \hspace{0.25em}
  v = - J^{\ast} v J, \overline{\partial}_{T_{X, J}}^{} v_g^{\ast} = 0 \},
\end{eqnarray*}
with $v^{\ast}_g \assign g^{- 1} v$. It has been showed in \cite{Pal2} that
for any $(g, \Omega) \in \mathcal{S}_{\omega}$ holds the inclusion
\begin{equation}
  \tmop{TC}_{\mathcal{S}_{\omega}, (g, \Omega)} \subseteq \mathbbm{T}^J_{g,
  \Omega}, \label{TConeS}
\end{equation}
with
\begin{eqnarray*}
  \mathbbm{T}^J_{g, \Omega} & : = & \left\{ (v, V) \in \mathbbm{D}^J_{g,
  \left[ 0 \right]} \times T_{\mathcal{V}_1} \mid 2 d d_J^c V_{\Omega}^{\ast}
  = - d \left( \nabla_g^{\ast_{\Omega}} v_g^{\ast} \neg \omega
  \right)_{_{_{_{_{_{}}}}}}  \right\} .
\end{eqnarray*}
(We will use the definition $2 d_J^c \assign i ( \overline{\partial}_J -
\partial_J)$ in this paper). We remind (see \cite{Pal2}) that a point $\left( g,
\Omega \right) \in \mathcal{S}_{\omega}$ is a K\"ahler-Ricci soliton if and only if
$\underline{H}^{}_{g, \Omega} = 0$. Furthermore,
\begin{eqnarray*}
  2 \underline{H}^{}_{g, \Omega}  =  - (\Delta^{\Omega}_{g, J} -
  2\mathbbm{I}) F \in \Lambda^{\Omega, \bot}_{g, J} \cap
  C_{\Omega}^{\infty} \left( X, \mathbbm{R} \right)_0 ,
\end{eqnarray*}
for all $\left( g, \Omega \right) \in \mathcal{S}_{\omega}$. The infinitesimal
properties of the map $\left( g, \Omega \right) \in \mathcal{S}_{\omega}
\longmapsto \underline{H}^{}_{g, \Omega}$ are explained in the next
sub-section.

\subsection{Triple splitting of the space $\mathbbm{T}^J_{g, \Omega}$}

In \cite{Pal2}, we introduce a pseudo-Riemannian metric $G$ over $\mathcal{M}
\times \mathcal{V}_1$ which is positive defined over $\mathbbm{T}^J_{g,
\Omega}$ for any $\left( g, \Omega \right) \in \mathcal{S}_{\omega}$, with $J
\assign - \omega^{- 1} g$. We denote by
\begin{eqnarray*}
  \Lambda^{\Omega, \bot}_{g, J} & \assign & \left[ \tmop{Ker}
  (\Delta^{\Omega}_{g, J} - 2\mathbbm{I}) \right]^{\bot_{\Omega}} \subset
  C_{\Omega}^{\infty} (X, \mathbbm{C})_0,
\end{eqnarray*}
the $L_{\Omega}^2$-orthogonal space to $\Lambda^{\Omega}_{g, J}$ inside
$C_{\Omega}^{\infty} (X, \mathbbm{C})_0$. By abuse of notations we will denote
by $G_{g, \Omega}$ the scalar product over $\Lambda^{\Omega, \bot}_{g, J}$, induced by the isomorphism
\begin{eqnarray*}
  \eta  : \Lambda^{\Omega, \bot}_{g, J} \oplus \mathcal{H}_{g, \Omega}^{0,
  1} \left( T_{X, J} \right) & \longrightarrow & \mathbbm{T}^J_{g, \Omega} \\
  &  & \\
  \left( \psi, A \right) & \longmapsto & \left( g \left(
  \overline{\partial}_{T_{X, J}} \nabla_{g, J}  \overline{\psi} +
  A_{_{_{_{}}}} \right), - \frac{1}{2} \tmop{Re} \left[
  (\Delta^{\Omega}_{g, J} - 2\mathbbm{I}) \psi \right] \Omega \right).
\end{eqnarray*}
Explicitely (see \cite{Pal2}),
\begin{eqnarray*}
  G_{g, \Omega} \left( \varphi, \psi \right) & = & \frac{1}{2} \int_X \left[
  (\Delta^{\Omega}_{g, J} - 2\mathbbm{I}) \varphi_{_{_{_{_{}}}}} \cdot
  \overline{\psi} \; + (\Delta^{\Omega}_{g, J} - 2\mathbbm{I})
  \psi_{_{_{_{_{}}}}} \cdot \overline{\varphi} \right] \Omega\\
  &  & \\
  & + & \frac{1}{2} \int_X \tmop{Im} \left[_{_{_{_{_{}}}}}
  (\Delta^{\Omega}_{g, J} - 2\mathbbm{I}) \varphi \right] \tmop{Im}
  \left[_{_{_{_{_{}}}}} (\Delta^{\Omega}_{g, J} - 2\mathbbm{I}) \psi \right]
  \Omega .
\end{eqnarray*}
For any $(g, \Omega) \in \mathcal{S}_{\omega}$, we introduce in \cite{Pal2}
the vector spaces
\begin{eqnarray*}
  \mathbbm{E}^J_{g, \Omega} & \assign & \left\{ u \in \Lambda^{\Omega,
  \bot}_{g, J} \mid (\Delta^{\Omega}_{g, J} -
  2\mathbbm{I})_{_{_{_{_{_{_{}}}}}}} u = \overline{(\Delta^{\Omega}_{g, J} -
  2\mathbbm{I})_{_{_{_{_{_{_{}}}}}}} u}  \; \right\},\\
  &  & \\
  \mathbbm{O}^J_{g, \Omega} & \assign & \left( \mathbbm{E}^J_{g, \Omega}
  \right)^{\bot_G} \cap \Lambda^{\Omega, \bot}_{g, J},
\end{eqnarray*}
and we denote by $\left[ g, \Omega \right]_{\omega} \assign \tmop{Symp}^0 (X,
\omega) \cdot (g, \Omega) \subset \mathcal{S}_{\omega}$ the orbit of the point
$(g, \Omega)$ under the action of the identity component of the group of
smooth symplectomorphisms $\tmop{Symp}^0 (X, \omega)$ of $X$. The map $\eta$
restricts to a $G$-isometry
\begin{eqnarray*}
  \eta : \mathbbm{O}^J_{g, \Omega} & \longrightarrow & T_{\left[ g, \Omega
  \right]_{\omega}, (g, \Omega)} .
\end{eqnarray*}
The positivity of the metric $G_{g, \Omega}$ over $\Lambda^{\Omega, \bot}_{g,
J}$, combined with an elliptic argument (see \cite{Pal2}) implies the
decomposition
\begin{eqnarray*}
  \Lambda^{\Omega, \bot}_{g, J} & = & \mathbbm{O}^J_{g, \Omega} \oplus_G
  \mathbbm{E}^J_{g, \Omega},
\end{eqnarray*}
Over a compact K\"ahler-Ricci soliton $\left( X, J, g, \omega \right)$, we
introduce the operator
\begin{eqnarray*}
  P^{\Omega}_{g, J} & \assign & (\Delta^{\Omega}_{g, J} - 2\mathbbm{I})
  \overline{ (\Delta^{\Omega}_{g, J} - 2\mathbbm{I})} .
\end{eqnarray*}
This is a non-negative self-adjoint real elliptic operator with respect to
the $L_{\Omega}^2$-hermitian product. The restriction of the differential of
the map map $\left( g, \Omega \right) \in \mathcal{S}_{\omega}
\longmapsto \underline{H}^{}_{g, \Omega}$ over the space $\Lambda^{\Omega, \bot}_{g, J}$,
identifies, via the isomorphism $\eta$, with the map
\begin{eqnarray*}
  D_{g, \Omega}  \underline{H} : \Lambda^{\Omega, \bot}_{g, J} &
  \longrightarrow & \Lambda^{\Omega, \bot}_{g, J} \cap C_{\Omega}^{\infty}
  \left( X, \mathbbm{R} \right)_0\\
  &  & \\
  \psi & \longmapsto & \frac{1}{4} P^{\Omega}_{g, J} \tmop{Re} \psi .
\end{eqnarray*}
This map restricts to an isomorphism 
$$
D_{g, \Omega}  \underline{H}
: \mathbbm{E}^J_{g, \Omega} \longrightarrow \Lambda^{\Omega, \bot}_{g, J} \cap
C_{\Omega}^{\infty} \left( X, \mathbbm{R} \right)_0 ,
$$ 
(see \cite{Pal2} for the technical details), 
and
\begin{eqnarray*}
  \mathbbm{O}^J_{g, \Omega} & = & \tmop{Ker} D_{g, \Omega}  \underline{H} \cap
  \Lambda^{\Omega, \bot}_{g, J} .
\end{eqnarray*}
Moreover, $\tmop{Ker} P^{\Omega}_{g, J} \cap C_{\Omega}^{\infty} \left( X,
\mathbbm{R} \right)_0 = \left\{ \tmop{Re} u \mid u \in \Lambda^{\Omega}_{g, J}
\right\} = : \tmop{Re} \Lambda^{\Omega}_{g, J}$ and
\begin{eqnarray*}
  P^{\Omega}_{g, J} C_{\Omega}^{\infty} \left( X, \mathbbm{R} \right)_0 & = &
  \Lambda^{\Omega, \bot}_{g, J} \cap C_{\Omega}^{\infty} \left( X, \mathbbm{R}
  \right)_0 .
\end{eqnarray*}
In general for any $\left( g, \Omega \right) \in \mathcal{S}_{\omega}$ K\"{a}hler-Ricci soliton
holds the identity
\[ \tmop{Ker} D_{g, \Omega}  \underline{H}^{}_{} \cap \mathbbm{T}^J_{g,
   \Omega} \; = T_{\left[ g, \Omega \right]_{\omega}, (g, \Omega)} \oplus_G
   \mathcal{H}_{g, \Omega}^{0, 1} \left( T_{X, J} \right), \]
with $J : = - \omega^{- 1} g$. We finally notice that applying the finiteness
theorem (see for example \cite{Ebi}, proposition 6.6, page 26), to the real elliptic operator\\
$P^{\Omega}_{g, J} : C_{\Omega}^{\infty} \left( X, \mathbbm{R} \right)_0
\longrightarrow C_{\Omega}^{\infty} \left( X, \mathbbm{R} \right)_0$, we
deduce the $L_{\Omega}^2$-orthogonal decomposition
\begin{equation}
  \label{dec-P-op} C_{\Omega}^{\infty} \left( X, \mathbbm{R} \right)_0 =
  \left[ \Lambda^{\Omega, \bot}_{g, J} \cap C_{\Omega}^{\infty} \left( X,
  \mathbbm{R} \right)_0 \right] \oplus_{_{\Omega}} \tmop{Re}
  \Lambda^{\Omega}_{g, J} .
\end{equation}
\begin{remark}
  \label{tau-rmk}We denote by $\Lambda^{\Omega}_{g, \mathbbm{R}} \assign
  \tmop{Ker}_{\R} (\Delta^{\Omega}_g - 2\mathbbm{I}) \subset C_{\Omega}^{\infty}
  (X, \mathbbm{R})_0$, and by
  \begin{eqnarray*}
    \Lambda^{\Omega, \bot}_{g, \mathbbm{R}} & \assign & \left[ \tmop{Ker}_{\R}
    (\Delta^{\Omega}_g - 2\mathbbm{I}) \right]^{\bot_{\Omega}} \subset
    C_{\Omega}^{\infty} (X, \mathbbm{R})_0,
  \end{eqnarray*}
  its $L_{\Omega}^2$-orthogonal inside $C_{\Omega}^{\infty} (X, R)_0$. It is
  easy to see that the map
  \begin{eqnarray*}
    \chi : \Lambda^{\Omega, \bot}_{g, \mathbbm{R}} \cap C_{\Omega}^{\infty}
    (X, \mathbbm{R})_0 & \longrightarrow & T_{\left[ g, \Omega
    \right]_{\omega}, (g, \Omega)},\\
    &  & \\
    u & \longmapsto & \left( 2 \omega_{_{_{_{_{_{}}}}}}
    \overline{\partial}_{T_{X, J}} \nabla_g u, \left( B^{\Omega}_{g, J} u
    \right) \, \Omega \right),
  \end{eqnarray*}
  is an isomorphism. Thus, there exists an isomorphism map
  \begin{eqnarray*}
    \tau : \mathbbm{O}^J_{g, \Omega} & \longrightarrow & i \Lambda^{\Omega,
    \bot}_{g, \mathbbm{R}} \\
    &  & \\
    \theta & \longmapsto & i u : \, \theta - i u \in
    \Lambda^{\Omega}_{g, J} .
  \end{eqnarray*}
\end{remark}

\section{Variation formulas for the $\Omega$-divergence operators}

For any $u, v \in C^{\infty} \left( X,
S^2 T^{\ast}_X \right)$ we define in \cite{Pal2} the real valued $1$-form
\begin{eqnarray*}
  M_g (u, v) (\xi) & \assign & 2 \nabla_g v (e_k, u^{\ast}_g e_k, \xi) +
  \nabla_g u (\xi, v^{\ast}_g e_k, e_k),
\end{eqnarray*}
for all $\xi \in T_X$.
We show now the following important lemma.
\begin{lemma}
  The first variation of the operator valued map
  $$\left( g, \Omega \right) \mapsto
  \nabla_g^{\ast_{\Omega}} : C^{\infty} \left( X, S^2 T_X \right)
  \longrightarrow C^{\infty} \left( X, T^{\ast}_X \right),
  $$
in arbitrary
  directions $(v, V)$ is given by the formula
  \begin{eqnarray}
    2 \left[ (D_{g, \Omega} \nabla_{\bullet}^{\ast_{\bullet}}) \left( v, V
    \right) \right] u & = & M_g (v, u) - \; 2 u \cdot \left(
    \nabla_g^{\ast_{\Omega}} v^{\ast}_g + \nabla_g V^{\ast}_{\Omega} \right) .
    \label{var-adjDer} 
  \end{eqnarray}
\end{lemma}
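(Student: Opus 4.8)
The plan is to differentiate the explicit weighted expression $\nabla_g^{\ast_\Omega} = e^f\nabla_g^\ast(e^{-f}\,\bullet)$, with $f = \log\frac{dV_g}{\Omega}$ and $\nabla_g^\ast$ the ordinary $L^2(g)$-adjoint of the covariant derivative, rather than the abstract defining relation. By the Leibniz rule the first variation in the direction $(v,V)$ splits into a \emph{weight part}, coming from varying the two conformal factors $e^{\pm f}$ while freezing $\nabla_g^\ast$, and a \emph{connection part} $e^f(D_{g,\Omega}\nabla_g^\ast)(e^{-f}u)$, coming from the variation of the unweighted adjoint. All contributions below are to the variation $\delta\nabla_g^{\ast_\Omega}u$; the overall factor $2$ in (\ref{var-adjDer}) is applied at the very end. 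I expect the connection part to reproduce $\tfrac12 M_g(v,u)$ and the weight part, together with a leftover divergence term, to reproduce $-u\cdot(\nabla_g^{\ast_\Omega}v_g^\ast + \nabla_g V_\Omega^\ast)$.

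First I would treat the weight part. Using $\dot f = \tfrac12\tmop{Tr}_g v - V_\Omega^\ast$ (from $\delta\log dV_g = \tfrac12\tmop{Tr}_g v$ and $\delta\log\Omega = V_\Omega^\ast$), a short computation with $e^f\nabla_g^\ast(e^{-f}\,\bullet)$ collapses the entire weight contribution to the single term $u\cdot\nabla_g\dot f = \tfrac12\,u\cdot\nabla_g\tmop{Tr}_g v - u\cdot\nabla_g V_\Omega^\ast$; in particular the $-V_\Omega^\ast$ summand already delivers the easy half $-u\cdot\nabla_g V_\Omega^\ast$ of the right-hand side.

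For the connection part I would insert the first-variation-of-Christoffel formula $\dot\Gamma_{ij}^k = \tfrac12 g^{kl}(\nabla_i v_{jl} + \nabla_j v_{il} - \nabla_l v_{ij})$ together with $\delta g^{ik} = -v^{ik}$ into the coordinate expression $\nabla_g^\ast u = -\tmop{div}_g u$. Conjugating the one first-order term by $e^{\pm f}$ turns $v^{ik}\nabla_i u_{kj}$ into $v^{ik}(\nabla_i u_{kj} - (\nabla_i f)u_{kj})$, producing an extra multiplicative term $-u\cdot(v_g^\ast\nabla_g f)$. The trace contraction $g^{ik}\dot\Gamma^l_{ik}$ yields $(\tmop{div}_g v)^l - \tfrac12\nabla^l\tmop{Tr}_g v$ contracted with $u$; the $\tfrac12\nabla\tmop{Tr}_g v$ piece cancels exactly against the $\tfrac12\,u\cdot\nabla_g\tmop{Tr}_g v$ left over from the weight part, while the surviving $u\cdot\tmop{div}_g v$ combines with $-u\cdot(v_g^\ast\nabla_g f)$ to form precisely $-u\cdot\nabla_g^{\ast_\Omega}v_g^\ast$, since $\nabla_g^{\ast_\Omega}v_g^\ast = -\tmop{div}_g v + v_g^\ast\nabla_g f$.

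The remaining genuinely first-order terms are $v^{ik}\nabla_i u_{kj}$ (from $\delta g^{ik}$) and $g^{ik}\dot\Gamma^l_{ij}u_{kl}$ (the second Christoffel contraction). Here I would pass to an orthonormal frame and exploit the symmetry of $u$ and $v$: the former equals $\nabla_g u(e_k, v_g^\ast e_k, \xi)$, while in the latter the two antisymmetric Christoffel contributions $\nabla_i v_{jm}$ and $-\nabla_m v_{ij}$ cancel after relabelling, leaving only $\tfrac12\nabla_g v(\xi, u_g^\ast e_k, e_k)$; together these are exactly $\tfrac12 M_g(v,u)$. Assembling all pieces gives $\delta\nabla_g^{\ast_\Omega}u = \tfrac12 M_g(v,u) - u\cdot(\nabla_g^{\ast_\Omega}v_g^\ast + \nabla_g V_\Omega^\ast)$, and multiplying by $2$ yields (\ref{var-adjDer}). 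I expect the main obstacle to be precisely this bookkeeping step: keeping track of the several first-order-in-$v$ contributions — from $\delta g^{-1}$, both Christoffel contractions, the conjugation by $e^{\pm f}$, and the volume-trace part of $\dot f$ — and correctly sorting which of them cancel, which repackage into the weighted divergence $\nabla_g^{\ast_\Omega}v_g^\ast$, and which assemble into $M_g(v,u)$.
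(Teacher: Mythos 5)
Your proposal is correct and follows essentially the same route as the paper: both differentiate the explicit formula for the weighted adjoint (the paper writes it as $-g^{-1}\neg\nabla_g u + \nabla_g f\neg u$, you as $e^f\nabla_g^{\ast}(e^{-f}\bullet)$), feed in the standard first variation of the Levi-Civita connection together with $\dot f = \tfrac12\tmop{Tr}_g v - V_{\Omega}^{\ast}$, and exploit the same cancellations (the antisymmetric Christoffel contributions against the symmetric $u$, and the two $\tfrac12\,u\cdot\nabla_g\tmop{Tr}_g v$ terms) to repackage the result as $\tfrac12 M_g(v,u) - u\cdot(\nabla_g^{\ast_{\Omega}}v_g^{\ast} + \nabla_g V_{\Omega}^{\ast})$. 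The only divergence is bookkeeping: you work in coordinates with Christoffel symbols and obtain the $-u\cdot(v_g^{\ast}\nabla_g f)$ term from conjugation by $e^{\pm f}$, whereas the paper works invariantly with Besse's formula for $\dot\nabla_g$ and gets that same term from the variation formula for the gradient $\nabla_{g_t}f_t$.
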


\begin{proof}
  We first differentiate the identity defining the covariant derivative of a
  symmetric $2$-tensor $u$ in the direction $v$. We infer
  \begin{eqnarray*}
    \dot{\nabla}_g u (\xi, \eta, \mu) & = & - u \left( \dot{\nabla}_g (\xi,
    \eta), \mu \right) - u \left( \eta, \dot{\nabla}_g (\xi, \mu) \right),
  \end{eqnarray*}
  where $\dot{\nabla}_g \assign (D_g \nabla_{\bullet}) (v)$. Using the
  variation formula for the Levi-Civita connection in \cite{Bes}, we obtain
  \begin{eqnarray*}
    2 \dot{\nabla}_g u (\xi, \eta, \mu) & = & - u \left( \nabla_{g, \xi}
    v^{\ast}_g \eta + \nabla_{g, \eta} v^{\ast}_g \xi - \left( \nabla_g
    v^{\ast}_g \eta \right)_g^T \xi, \mu \right)\\
    &  & \\
    & - & u \left( \eta, \nabla_{g, \xi} v^{\ast}_g \mu + \nabla_{g, \mu}
    v^{\ast}_g \xi - \left( \nabla_g v^{\ast}_g \mu \right)_g^T \xi \right) .
  \end{eqnarray*}
  We transform the term
  \begin{eqnarray*}
    u \left( \left( \nabla_g v^{\ast}_g \eta \right)_g^T \xi, \mu \right) & =
    & g \left( u^{\ast}_g \left( \nabla_g v^{\ast}_g \eta \right)_g^T \xi, \mu
    \right)\\
    &  & \\
    & = & g \left( \left( \nabla_g v^{\ast}_g \eta \right)_g^T \xi,
    u^{\ast}_g \mu \right)\\
    &  & \\
    & = & g \left( \xi, \nabla_g v^{\ast}_g (u^{\ast}_g \mu, \eta) \right)\\
    &  & \\
    & = & \nabla_g v (u^{\ast}_g \mu, \eta, \xi) .
  \end{eqnarray*}
  We deduce the variation formula
  \begin{eqnarray*}
    2 \dot{\nabla}_g u (\xi, \eta, \mu) & = & - u \left( \nabla_{g, \xi}
    v^{\ast}_g \eta + \nabla_{g, \eta} v^{\ast}_g \xi, \mu \right) + \nabla_g
    v (u^{\ast}_g \mu, \xi, \eta)\\
    &  & \\
    & - & u \left( \eta, \nabla_{g, \xi} v^{\ast}_g \mu + \nabla_{g, \mu}
    v^{\ast}_g \xi \right) + \nabla_g v (u^{\ast}_g \eta, \xi, \mu).
  \end{eqnarray*}
Thus, using the fact that $u$ is symmetric we infer
  \begin{eqnarray*}
    2 \left( g^{- 1} \neg \dot{\nabla}_g u \right) \mu & = & 2 u \left(
    \nabla^{\ast}_g v^{\ast}_g, \mu \right) + \nabla_g v (u^{\ast}_g \mu, e_k,
    e_k)\\
    &  & \\
    & - & u \left( \nabla_{g, e_k} v^{\ast}_g \mu + \nabla_{g, \mu}
    v^{\ast}_g e_k, e_k \right) + \nabla_g v (e_k, u^{\ast}_g e_k, \mu),
  \end{eqnarray*}
  where $g^{- 1} \in C^{\infty} (X, S^2 T_X)$ and $(e_k)_k$ is a
  $g$-orthonormed basis of $T_{X, p}$ which diagonalises $u$ at the point $p$.
  We observe however that the right hand-side of the previous equality is
  independent of the choice of the $g$-orthonormed basis $(e_k)_k$ thanks to
  the intrinsic definition of trace. Simplifying, we deduce
  \begin{equation}
    \label{Tr-varCov} 2 \left( g^{- 1} \neg \dot{\nabla}_g u \right) \mu = 2 u
    \left( \nabla^{\ast}_g v^{\ast}_g, \mu \right) + \nabla_g v (u^{\ast}_g
    \mu, e_k, e_k) - \nabla_g v (\mu, u^{\ast}_g e_k, e_k) .
  \end{equation}
  We can compute now the first variation of the expression
  \begin{eqnarray*}
    \nabla_g^{\ast_{\Omega}} u & = & - g^{- 1} \neg \nabla_g u + \nabla_g f
    \neg u,
  \end{eqnarray*}
  with $f \equiv f_{g, \Omega} \assign \log \frac{d V_g}{\Omega}$ . We observe
  the identity
  \begin{eqnarray*}
    \left[ (D_{g, \Omega} \nabla_{\bullet}^{\ast_{\bullet}}) \left( v, V
    \right) \right] u  &=&  (v_g^{\ast} g^{- 1}) \neg \nabla_g u - g^{- 1}
    \neg \dot{\nabla}_g u 
    \\
    \\
    &+&
    \left[ (D_{g, \Omega} \nabla_{\bullet} f_{\bullet,
    \bullet}) \left( v, V \right) \right] \neg u .
  \end{eqnarray*}
  Let $(e_k)_k$ be a $g$-orthonormed local frame of $T_X$ such that $\nabla_g
  e_k  (p) = 0$, for some arbitrary point $p$. Using (\ref{Tr-varCov}) and the
  variation formulas
  \begin{equation}
    \label{var-grad}  \frac{d}{d t}  \left( \nabla_{g_t} f_t \right) =
    \nabla_{g_t}  \dot{f}_t - \dot{g}^{\ast}_t \nabla_{g_t} f_t,
  \end{equation}
  \begin{equation}
    \label{var-f}  \dot{f}_t = \frac{1}{2} \tmop{Tr}_{g_t}  \dot{g}_t -
    \dot{\Omega}^{\ast}_t,
  \end{equation}
  we obtain the equalities at the point $p$,
  \begin{eqnarray*}
    2 \left[ (D_{g, \Omega} \nabla_{\bullet}^{\ast_{\bullet}}) \left( v, V
    \right) \right] u (\mu) & = & 2 \nabla_g u (e_k, v^{\ast}_g e_k, \mu) - 2
    u \left( \nabla^{\ast}_g v^{\ast}_g, \mu \right) \\
    &  & \\
    & - & \nabla_g v (u^{\ast}_g \mu, e_k, e_k) + \nabla_g v (\mu, u^{\ast}_g
    e_k, e_k)\\
    &  & \\
    & + & u \left( \nabla_g \left( \tmop{Tr}_g v - 2 V^{\ast}_{\Omega}
    \right) - 2 v^{\ast}_g \nabla_g f, \mu \right)\\
    &  & \\
    & = & 2 \nabla_g u (e_k, v^{\ast}_g e_k, \mu) - 2 u \left(
    \nabla_g^{\ast_{\Omega}} v^{\ast}_g, \mu \right)\\
    &  & \\
    & - & \nabla_g v (u^{\ast}_g \mu, e_k, e_k) + \nabla_g v (\mu, u^{\ast}_g
    e_k, e_k)\\
    &  & \\
    & + & u \left( \nabla_g \left( \tmop{Tr}_g v - 2 V^{\ast}_{\Omega}
    \right), \mu \right)\\
    &  & \\
    & = & M_g (v, u) (\mu) - \; 2 u \left( \nabla_g^{\ast_{\Omega}}
    v^{\ast}_g + \nabla_g V^{\ast}_{\Omega}, \mu \right),
  \end{eqnarray*}
  thanks to the identity at the point $p$,
  \begin{eqnarray*}
    \nabla_g v (u^{\ast}_g \mu, e_k, e_k) & = & u \left( \nabla_g \tmop{Tr}_g
    v, \mu \right) .
  \end{eqnarray*}
  In order to see this last fact, we observe the equalities
  \begin{eqnarray*}
    u \left( \nabla_g \tmop{Tr}_g v, \mu \right) & = & g \left( u^{\ast}_g
    \nabla_g \tmop{Tr}_g v, \mu \right)\\
    &  & \\
    & = & g \left( \nabla_g \tmop{Tr}_g v, u^{\ast}_g \mu \right)\\
    &  & \\
    & = & \left( d \nabla_g \tmop{Tr}_g v \right)  (u^{\ast}_g \mu)\\
    &  & \\
    & = & (u^{\ast}_g \mu) . v (e_k, e_k)\\
    &  & \\
    & = & \nabla_g v (u^{\ast}_g \mu, e_k, e_k),
  \end{eqnarray*}
  at the point $p$. We obtain the required variation formula.
\end{proof}

In a similar way we compute the first variation formula for the operator
$\tmop{div}_g^{\Omega}$ acting on 1-forms.

\begin{lemma}
  The first variation of the operator valued map
  $$\left( g, \Omega \right) \mapsto
  \tmop{div}_g^{\Omega} : C^{\infty} \left( X, T^{\ast}_X \right)
  \longrightarrow C^{\infty} \left( X, \mathbbm{R} \right),
  $$ 
  in arbitrary
  directions $(v, V)$ is given by the formula
  \begin{eqnarray*}
    \left[ \left( D_{g, \Omega} \tmop{div}^{\bullet}_{\bullet} \right) \left(
    v, V \right)_{_{_{_{}}}} \right] \alpha & = & - \; \langle \nabla_g
    \alpha^{\ast}_g, v^{\ast}_g \rangle_g + \; 2 \alpha \cdot \left(
    \nabla_g^{\ast_{\Omega}} v^{\ast}_g + \nabla_g V^{\ast}_{\Omega} \right) .
  \end{eqnarray*}
\end{lemma}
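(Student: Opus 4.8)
The plan is to reduce the statement to an explicit formula for $\tmop{div}_g^{\Omega}$ that can be differentiated factor by factor, exactly as in the previous lemma. Since $\tmop{div}_g^{\Omega}$ acts on a $1$-form $\alpha$ by passing to the dual vector field $\alpha_g^{\ast} \assign g^{-1} \alpha$, the definition $\tmop{div}^{\Omega} \xi = d(\xi \neg \Omega)/\Omega$ together with $\Omega = e^{-f} dV_g$ gives $\tmop{div}_g^{\Omega} \alpha = \tmop{div}_g \alpha_g^{\ast} - \nabla_g f \neg \alpha = g^{-1} \neg \nabla_g \alpha - \nabla_g f \neg \alpha = - \nabla_g^{\ast_{\Omega}} \alpha$. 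This is the exact $1$-form analogue of the expression $\nabla_g^{\ast_{\Omega}} u = - g^{-1} \neg \nabla_g u + \nabla_g f \neg u$ used in the proof of the previous lemma, with the single-slot tensor $\alpha$ in place of the symmetric $2$-tensor $u$; hence the computation mirrors that proof.

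First I would differentiate the three $(g,\Omega)$-dependent building blocks in arbitrary directions $(v,V)$. (i) The variation of the inverse metric in the trace $g^{-1} \neg \nabla_g \alpha$ produces, after raising indices at a point, exactly $- \langle \nabla_g \alpha_g^{\ast}, v_g^{\ast} \rangle_g$, which is already the first term of the claimed formula. (ii) The variation of the Levi-Civita connection is handled by Besse's formula \cite{Bes}, from which I would derive the $1$-form analogue of (\ref{Tr-varCov}); because $\alpha$ has a single contracted slot, this is the one-slot version of that identity, and its trace contributes a term proportional to $\alpha(\nabla_g^{\ast} v_g^{\ast})$ together with a pure-trace term proportional to $\alpha(\nabla_g \tmop{Tr}_g v)$. (iii) The variation of the weighted gradient $\nabla_g f \neg \alpha$ is computed using (\ref{var-grad}) and (\ref{var-f}), contributing $\alpha(v_g^{\ast} \nabla_g f)$, $\alpha(\nabla_g V_{\Omega}^{\ast})$, and a second pure-trace term proportional to $\alpha(\nabla_g \tmop{Tr}_g v)$.

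Finally I would combine the three contributions at a $g$-orthonormal frame that is normal at an arbitrary point $p$. The two pure-trace terms coming from (ii) and (iii) cancel, and the identity $\nabla_g^{\ast_{\Omega}} v_g^{\ast} = \nabla_g^{\ast} v_g^{\ast} + v_g^{\ast} \nabla_g f$ (which follows from $\nabla_g^{\ast_{\Omega}} = e^f \nabla_g^{\ast}(e^{-f} \bullet)$) regroups the surviving terms into the multiple of $\alpha \cdot (\nabla_g^{\ast_{\Omega}} v_g^{\ast} + \nabla_g V_{\Omega}^{\ast})$ appearing in the statement, which together with the term from (i) yields the asserted formula. I expect the main obstacle to be the bookkeeping in step (ii): correctly transforming the transpose term coming from Besse's formula, as in the manipulation $u((\nabla_g v_g^{\ast} \eta)_g^T \xi, \mu) = \nabla_g v(u_g^{\ast} \mu, \eta, \xi)$ of the previous proof but now with $\alpha$ in place of $u$, while keeping exact track of how many slots are contracted. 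This is precisely the place where the coefficients of the final formula are fixed, so it is where a spurious factor is easiest to introduce or lose.
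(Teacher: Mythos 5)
Your proposal is correct and follows essentially the same route as the paper: it differentiates the explicit expression $\tmop{div}_g^{\Omega} \alpha = g^{- 1} \neg \nabla_g \alpha - \alpha \cdot \nabla_g f$ term by term, using Besse's variation of the Levi-Civita connection for the middle contribution and the formulas (\ref{var-grad}), (\ref{var-f}) for the weighted gradient, with the two $\alpha \cdot \nabla_g \tmop{Tr}_g v$ terms cancelling and $\nabla_g^{\ast} v_g^{\ast} + v_g^{\ast} \nabla_g f = \nabla_g^{\ast_{\Omega}} v_g^{\ast}$ regrouping the rest, exactly as in the paper's computation.
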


We include the proof for readers convenience.

\begin{proof}
  Let $\alpha$ be a $1$-form and let $\xi, \eta$ be two smooth vector fields.
  Differentiating the identity
  \begin{eqnarray*}
    \xi . \left( \alpha \cdot \eta \right) & = & \nabla_{g, \xi} \alpha \cdot
    \eta + \alpha \cdot \nabla_{g, \xi} \eta,
  \end{eqnarray*}
  with respect to the variable $g$ we obtain
  \begin{eqnarray*}
    2 \dot{\nabla}_g \alpha \left( \xi, \eta \right) & = & - \alpha \cdot 2
    \dot{\nabla}_g \left( \xi, \eta \right) \\
    &  & \\
    & = & - \alpha \cdot \left( \nabla_{g, \xi} v^{\ast}_g \cdot \eta +
    \nabla_{g, \eta} v^{\ast}_g \cdot \xi \right) + \nabla_g v \left(
    \alpha^{\ast}_g, \xi, \eta \right) .
  \end{eqnarray*}
  We notice indeed the equalities
  \begin{eqnarray*}
    \alpha \cdot \left[ \left( \nabla_{g, \bullet} v^{\ast}_g \cdot \eta
    \right)_g^T \cdot \xi_{_{_{_{}}}} \right] & = & g \left( \alpha^{\ast}_g,
    \left( \nabla_{g, \bullet} v^{\ast}_g \cdot \eta \right)_g^T \cdot
    \xi_{_{_{_{}}}} \right)\\
    &  & \\
    & = & g \left( \nabla_{g, \alpha^{\ast}_g} v^{\ast}_g \cdot \eta,
    \xi_{_{_{_{}}}} \right)\\
    &  & \\
    & = & \nabla_g v \left( \alpha^{\ast}_g, \xi, \eta \right) .
  \end{eqnarray*}
  We deduce
  \begin{eqnarray*}
    2 \left( g^{- 1} \neg \dot{\nabla}_g \alpha \right) & = & 2 \alpha \cdot
    \nabla^{\ast}_g v^{\ast}_g \noplus + \alpha^{\ast}_g . \tmop{Tr}_g v\\
    &  & \\
    & = & \alpha \cdot \left( 2 \nabla^{\ast}_g v^{\ast}_g \noplus + \nabla_g
    \tmop{Tr}_g v \right) .
  \end{eqnarray*}
  We can compute now the first variation of the expression
  \begin{eqnarray*}
    \tmop{div}_g^{\Omega} \alpha & = & g^{- 1} \neg \nabla_g \alpha - \;
    \alpha \cdot \nabla_g f .
  \end{eqnarray*}
  We observe the identities
  \begin{eqnarray*}
    2 \left[ (D_{g, \Omega} \tmop{div}^{\bullet}_{\bullet}) \left( v, V
    \right) \right] \alpha & = & - 2 (v_g^{\ast} g^{- 1}) \neg \nabla_g \alpha
    + 2 g^{- 1} \neg \dot{\nabla}_g \alpha 
    \\
    &  & \\
    &-& 2 \alpha \cdot \left[ (D_{g,
    \Omega} \nabla_{\bullet} f_{\bullet, \bullet}) \left( v, V \right) \right]
    \\
    &  & \\
    & = & - 2 \nabla_g \alpha \left( e_k, v^{\ast}_g e_k \right) + \alpha
    \cdot \left( 2 \nabla^{\ast}_g v^{\ast}_g \noplus + \nabla_g \tmop{Tr}_g v
    \right)\\
    &  & \\
    & - & \alpha \cdot \left( \nabla_g \left( \tmop{Tr}_g v - 2
    V^{\ast}_{\Omega} \right) - 2 v^{\ast}_g \cdot \nabla_g f \right) .
  \end{eqnarray*}
  We infer the required variation formula.
\end{proof}

We can compute now a first variation formula for the double divergence
operator $\tmop{div}_g^{\Omega} \nabla_g^{\ast_{\Omega}}$. 
We observe first
the trivial identity
\begin{eqnarray*}
  \left[ D_{g, \Omega} \left( \tmop{div}^{\bullet}_{\bullet}
  \nabla_{\bullet}^{\ast_{\bullet}} \right) \left( v, V \right)_{_{_{_{}}}}
  \right] v_{_{_{_{_{}}}}} & = & \left[ \left( D_{g, \Omega}
  \tmop{div}^{\bullet}_{\bullet} \right) \left( v, V \right)_{_{_{_{}}}}
  \right] \nabla_g^{\ast_{\Omega}} v_{_{_{_{_{}}}}}\\
  &  & \\
  & + & \tmop{div}_g^{\Omega} \left\{ \left[ \left( D_{g, \Omega}
  \nabla_{\bullet}^{\ast_{\bullet}} \right) \left( v, V \right)_{_{_{_{}}}}
  \right] v_{_{_{_{_{}}}}} \right\},
\end{eqnarray*}
and we explicit the last term;
\begin{eqnarray*}
  &  & 2 \tmop{div}_g^{\Omega} \left\{ \left[ \left( D_{g, \Omega}
  \nabla_{\bullet}^{\ast_{\bullet}} \right) \left( v, V \right)_{_{_{_{}}}}
  \right] v_{_{_{_{_{}}}}} \right\}\\
  &  & \\
  & = & e_l . \left[ 2 \nabla_g v \left( e_k, v^{\ast}_g e_k, e_l \right) +
  \nabla_g v \left( e_l, v^{\ast}_g e_k, e_k \right)_{_{_{_{_{_{}}}}}} - 2 v
  \left( \nabla_g^{\ast_{\Omega}} v^{\ast}_g + V^{\ast}_{\Omega}, e_l \right)
  \right]\\
  &  & \\
  & - & 2 \nabla_g v \left( e_k, v^{\ast}_g e_k, \nabla_g f \right) -
  \nabla_g v \left( \nabla_g f, v^{\ast}_g e_k, e_k \right)_{_{_{_{_{_{}}}}}}
  + 2 v \left( \nabla_g^{\ast_{\Omega}} v^{\ast}_g + V^{\ast}_{\Omega},
  \nabla_g f \right).
  \end{eqnarray*}
  Developing further we obtain
\begin{eqnarray*}
  &  & 2 \tmop{div}_g^{\Omega} \left\{ \left[ \left( D_{g, \Omega}
  \nabla_{\bullet}^{\ast_{\bullet}} \right) \left( v, V \right)_{_{_{_{}}}}
  \right] v_{_{_{_{_{}}}}} \right\}\\
  &  & \\
  & = & 2 g (\nabla_{g, e_l} \nabla_{g, e_k} v_g^{\ast} \cdot v^{\ast}_g e_k,
  e_l) + 2 g (\nabla_{g, e_k} v_g^{\ast} \cdot \nabla_{g, e_l} v^{\ast}_g e_k,
  e_l)\\
  &  & \\
  & + & \nabla^2_{g, e_l, e_l} v \left( v^{\ast}_g e_k, e_k \right) + g
  (\nabla_{g, e_l} v_g^{\ast} \cdot \nabla_{g, e_l} v^{\ast}_g e_k, e_k)\\
  &  & \\
  & - & 2 \nabla_{g, e_l} v \left( e_l, \nabla_g^{\ast_{\Omega}} v^{\ast}_g +
  V^{\ast}_{\Omega} \right) - 2 v \left(_{_{_{_{_{_{}}}}}} \nabla_{g, e_l}
  \left( \nabla_g^{\ast_{\Omega}} v^{\ast}_g + V^{\ast}_{\Omega} \right), e_l
  \right)\\
  &  & \\
  & - & 2 g (\nabla_{g, e_k} v_g^{\ast} \cdot v^{\ast}_g e_k, \nabla_g f) -
  \nabla_g v \left( \nabla_g f, v^{\ast}_g e_k, e_k \right)_{_{_{_{_{_{}}}}}}
  + 2 v \left( \nabla_g^{\ast_{\Omega}} v^{\ast}_g + V^{\ast}_{\Omega},
  \nabla_g f \right)\\
  &  & \\
  & = & 2 g (v^{\ast}_g e_k, \nabla_{g, e_l} \nabla_{g, e_k} v_g^{\ast} e_l)
  + 2 g (\nabla_{g, e_l} v^{\ast}_g e_k, \nabla_{g, e_k} v_g^{\ast} e_l)\\
  &  & \\
  & - & \Delta^{\Omega}_g v \left( v^{\ast}_g e_k, e_k \right) + g
  (\nabla_{g, e_l} v^{\ast}_g e_k, \nabla_{g, e_l} v_g^{\ast} e_k)\\
  &  & \\
  & + & 2 \nabla_g^{\ast_{\Omega}} v_{_{_{_{_{}}}}} \cdot \left(
  \nabla_g^{\ast_{\Omega}} v^{\ast}_g + V^{\ast}_{\Omega} \right) - 2 g
  \left(_{_{_{_{_{_{}}}}}} \nabla_{g, e_l} \left( \nabla_g^{\ast_{\Omega}}
  v^{\ast}_g + V^{\ast}_{\Omega} \right), v^{\ast}_g e_l \right)\\
  &  & \\
  & - & 2 g (v^{\ast}_g e_k, \nabla_{g, e_k} v_g^{\ast} \cdot \nabla_g f) .
\end{eqnarray*}
If we set $$\widehat{\nabla_g v_g^{\ast} } \left( \xi, \eta \right) : =
\nabla_g v_g^{\ast} \left( \eta, \xi \right),$$ then the last expression writes
as
\begin{eqnarray*}
  &  & 2 \tmop{div}_g^{\Omega} \left\{ \left[ \left( D_{g, \Omega}
  \nabla_{\bullet}^{\ast_{\bullet}} \right) \left( v, V \right)_{_{_{_{}}}}
  \right] v_{_{_{_{_{}}}}} \right\}\\
  &  & \\
  & = & 2 g \left( v^{\ast}_g e_k, \nabla_{g, e_l} \widehat{\nabla_g
  v_g^{\ast} } \left( e_l, e_k \right) \right) + 2 g \left( \nabla_g
  v^{\ast}_g  \left( e_l, e_k \right), \widehat{\nabla_g v_g^{\ast} } \left(
  e_l, e_k \right) \right)\\
  &  & \\
  & - & g \left( \Delta^{\Omega}_g v^{\ast}_g \cdot v^{\ast}_g e_k,
  e_{k_{_{_{}}}} \right) + | \nabla_g v^{\ast}_g |^2_g \\
  &  & \\
  & + & 2 \nabla_g^{\ast_{\Omega}} v_{_{_{_{_{}}}}} \cdot \left(
  \nabla_g^{\ast_{\Omega}} v^{\ast}_g + V^{\ast}_{\Omega} \right) - 2
  \left\langle \nabla_g  \left( \nabla_g^{\ast_{\Omega}} v^{\ast}_g +
  V^{\ast}_{\Omega} \right)_{_{_{_{_{}}}}}, v^{\ast}_g \right\rangle_g\\
  &  & \\
  & - & 2 g \left( v^{\ast}_g e_k, \widehat{\nabla_g v_g^{\ast} } \left(
  \nabla_g f, e_k \right) \right) .
\end{eqnarray*}
We infer the formula
\begin{eqnarray*}
  \tmop{div}_g^{\Omega} \left\{ \left[ \left( D_{g, \Omega}
  \nabla_{\bullet}^{\ast_{\bullet}} \right) \left( v, V \right)_{_{_{_{}}}}
  \right] v_{_{_{_{_{}}}}} \right\} & = & - \; \frac{1}{4} \Delta^{\Omega}_g 
  |v|^2_g \\
  &  & \\
  & - & \left\langle \nabla_g^{\ast_{\Omega}} \widehat{\nabla_g v_g^{\ast} },
  v^{\ast}_g \right\rangle_g \noplus + \left\langle \widehat{\nabla_g
  v_g^{\ast} }, \nabla_g v^{\ast}_g \right\rangle_g\\
  &  & \\
  & + & \nabla_g^{\ast_{\Omega}} v \cdot \left( \nabla_g^{\ast_{\Omega}}
  v_g^{\ast} + \nabla_g V^{\ast}_{\Omega} \right)\\
  &  & \\
  & - & \left\langle \nabla_g \left( \nabla_g^{\ast_{\Omega}} v_g^{\ast} +
  \nabla_g V^{\ast}_{\Omega} \right)_{_{_{_{_{}}}}}, v^{\ast}_g
  \right\rangle_g \noplus .
\end{eqnarray*}
We obtain in conclusion the variation identity
\begin{eqnarray}
  \left[ D_{g, \Omega} \left( \tmop{div}^{\bullet}_{\bullet}
  \nabla_{\bullet}^{\ast_{\bullet}} \right) \left( v, V \right)_{_{_{_{}}}}
  \right] v_{_{_{_{_{}}}}} & = & - \; \frac{1}{4} \Delta^{\Omega}_g  |v|^2_g 
  \nonumber\\
  &  &  \nonumber\\
  & - & \left\langle \nabla_g^{\ast_{\Omega}} \widehat{\nabla_g v_g^{\ast} },
  v^{\ast}_g \right\rangle_g \noplus + \left\langle \widehat{\nabla_g
  v_g^{\ast} }, \nabla_g v^{\ast}_g \right\rangle_g \noplus \nonumber\\
  &  &  \nonumber\\
  & + & 2 \nabla_g^{\ast_{\Omega}} v \cdot \left( \nabla_g^{\ast_{\Omega}}
  v_g^{\ast} + \nabla_g V^{\ast}_{\Omega} \right) \nonumber\\
  &  &  \nonumber\\
  & - & \left\langle \nabla_g \left( 2 \nabla_g^{\ast_{\Omega}} v_g^{\ast} +
  \nabla_g V^{\ast}_{\Omega} \right), v^{\ast}_{g_{_{_{_{}}}}} \right\rangle_g
  \noplus . \label{var-div2} 
\end{eqnarray}

\section{The second variation of Perelman's $H$ map}

\begin{lemma}
  The Hessian form $\nabla_G D H (g, \Omega)$ of Perelman's map \\$\left( g, \Omega \right) \in \mathcal{M} \times \mathcal{V}_1
\longmapsto H_{g, \Omega}$, with respect
  to the pseudo-Riemannian structure $G$ at the point $(g, \Omega) \in
  \mathcal{M} \times \mathcal{V}_1$ in arbitrary directions $(v, V)$ is given
  by the expression
  \begin{eqnarray*}
    2 \nabla_G D H (g, \Omega) (v, V ; v, V) & = & - \frac{1}{2}  \left\langle
    \mathcal{L}^{\Omega}_g v, v \right\rangle_g - \Delta^{\Omega}_g \left[
    \frac{1}{4}  \left| v \right|^2_g + (V_{\Omega}^{\ast})^2\right]\\
    &  & \\
    & + & \frac{1}{2}  \left| v \right|^2_g + (V_{\Omega}^{\ast})^2 -
    \frac{1}{2} G_{g, \Omega} \left( v, V ; v, V \right)\\
    &  & \\
    & - & 2 \left| \nabla_g^{\ast_{\Omega}} v_g^{\ast} +_{_{_{_{}}}} \nabla_g
    V^{\ast}_{\Omega} \right|^2_g\\
    &  & \\
    & + & \left\langle \nabla_{g_{}} \left( 2 \nabla_g^{\ast_{\Omega}}
    v_g^{\ast} + 3 \nabla_g V^{\ast}_{\Omega} \right),
    v_g^{\ast} \right\rangle_g \noplus\\
    &  & \\
    & + & \left\langle \nabla_g^{\ast_{\Omega}} v_g^{\ast},
    \nabla_g^{\ast_{\Omega}} v_g^{\ast} + 2_{_{_{_{_{_{}}}}}} \nabla_g
    V^{\ast}_{\Omega} \right\rangle_g\\
    &  & \\
    & + & V^{\ast}_{\Omega} \left(_{_{_{_{_{_{}}}}}} \tmop{div}^{\Omega_{}}
    \nabla_g^{\ast_{\Omega}} v^{\ast}_g + \langle v, h_{g, \Omega}
    \rangle_{g_{}} \right) .
  \end{eqnarray*}
\end{lemma}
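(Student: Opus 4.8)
The natural approach is to realise the covariant Hessian as an ordinary second derivative along a path, corrected by the Levi-Civita connection $\nabla^G$ of the pseudo-Riemannian metric $G$. For the affine path $g_t \assign g + t v$, $\Omega_t \assign \Omega + t V$ (which stays in $\mathcal{M} \times \mathcal{V}_1$ for small $t$, since $\int_X V = 0$ and positivity is open), the flat acceleration of the path vanishes, so that
\begin{eqnarray*}
  \nabla_G D H (g, \Omega) (v, V ; v, V) & = & \frac{d^2}{d t^2} \Big|_{t = 0} H_{g_t, \Omega_t} \; - \; D_{g, \Omega} H \left( \nabla^G_{(v, V)} (v, V) \right) .
\end{eqnarray*}
Thus the computation splits into the full (non-covariant) second variation of $H$ along the straight line and the Christoffel correction coming from $\nabla^G$, the latter being responsible for the $- \frac{1}{2} G_{g, \Omega} (v, V ; v, V)$ term in the statement.

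First I would recall the first variation $D_{g, \Omega} H (v, V)$ of Perelman's map and differentiate it once more in the direction $(v, V)$. Writing $2 H = - \Delta^{\Omega}_g f + \tmop{Tr}_g h + 2 f$, I treat the three summands separately. The term $2 f$ is handled by applying the scalar variation (\ref{var-f}) twice. For $\Delta^{\Omega}_g f = - \tmop{div}^{\Omega}_g \nabla_g f$ I would use the first variation of $\tmop{div}^{\Omega}_g$ (the second Lemma above), the gradient variation (\ref{var-grad}), and (\ref{var-f}) for $\dot{f}$; the cross terms between the varying operator and its varying argument are exactly of the type packaged in the double-divergence identity (\ref{var-div2}), which I would invoke to collapse them into the $| \nabla_g^{\ast_{\Omega}} v_g^{\ast} + \nabla_g V^{\ast}_{\Omega} |^2_g$-, $\langle \nabla_g ( \cdots ), v_g^{\ast} \rangle_g$- and $\langle \nabla_g^{\ast_{\Omega}} v_g^{\ast}, \cdots \rangle_g$-type survivors appearing in the conclusion. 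The term $\tmop{Tr}_g h$, with $h = \tmop{Ric} (g) + \nabla_g d f - g$, is the genuinely curvature-sensitive block: combining the second variation of $\tmop{Ric} (g)$ (Besse) with the weight corrections and the variation formula (\ref{var-adjDer}) for $\nabla^{\ast_{\Omega}}_g$, I expect the leading contribution $\tfrac{d^2}{dt^2} \tmop{Tr}_{g_t} h_{g_t, \Omega_t} = - \tfrac{1}{2} \langle \mathcal{L}^{\Omega}_g v, v \rangle_g + \cdots$, which supplies the weighted Lichnerowicz term of the statement, while the Laplacian-type leftovers assemble into the $- \Delta^{\Omega}_g [ \tfrac14 |v|^2_g + (V^{\ast}_{\Omega})^2 ]$ and $\tfrac12 |v|^2_g + (V^{\ast}_{\Omega})^2$ terms.

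Next I would compute the Christoffel correction $D_{g, \Omega} H ( \nabla^G_{(v, V)} (v, V))$ using the explicit form of $G$ and its connection from \cite{Pal2}. Pairing this correction with the first variation $D_{g, \Omega} H$ — whose principal part is of the form $\tmop{div}^{\Omega}_g \nabla^{\ast_{\Omega}}_g v^{\ast}_g + \langle v, h \rangle_g$ (up to the $V$-dependent terms and a universal constant) — I expect to produce both the $- \tfrac{1}{2} G_{g, \Omega} (v, V ; v, V)$ term and, jointly with the mixed second variation in which $\dot{f}$ carries the factor $V^{\ast}_{\Omega}$ through (\ref{var-f}), the term $V^{\ast}_{\Omega} ( \tmop{div}^{\Omega} \nabla_g^{\ast_{\Omega}} v^{\ast}_g + \langle v, h_{g, \Omega} \rangle_g )$. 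Adding the two contributions and applying (\ref{var-div2}) once more to recombine the first-order weighted quantities $\nabla^{\ast_{\Omega}}_g v^{\ast}_g + \nabla_g V^{\ast}_{\Omega}$ should leave exactly the displayed expression.

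The main obstacle will be the curvature block $\tmop{Tr}_g h$: extracting the weighted operator $\mathcal{L}^{\Omega}_g$ cleanly from the second variation of $\tmop{Ric}_g (\Omega)$ demands careful control of the numerous weighted cross terms (products of $\nabla_g v_g^{\ast}$, $\widehat{\nabla_g v_g^{\ast}}$, $\nabla^{\ast_{\Omega}}_g v_g^{\ast}$, $\nabla_g V^{\ast}_{\Omega}$ and $\nabla_g f$), and verifying that, once the $G$-Christoffel correction is subtracted, all spurious terms cancel and the survivors collapse into the compact form stated. A secondary difficulty is to make the connection $\nabla^G$ of the pseudo-Riemannian metric $G$ explicit enough to pin down the precise coefficients of the $- \tfrac12 G_{g, \Omega}(v, V; v, V)$ and $V^{\ast}_{\Omega}(\cdots)$ terms.
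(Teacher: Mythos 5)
Your structural setup is exactly the paper's: the covariant Hessian is computed as $\frac{d^2}{dt^2}H(g_t,\Omega_t)$ minus $D_{g,\Omega}H$ applied to the $G$-connection correction of the velocity (the paper uses a general curve together with the explicit formulas $\theta_t = \ddot g_t + \dot g_t(\dot\Omega^{\ast}_t - \dot g^{\ast}_t)$ and $\Theta_t$ from \cite{Pal2}, which for your affine path reduce to precisely the Christoffel term you describe), and you correctly locate the origin of the $-\frac12 G_{g,\Omega}(v,V;v,V)$ term in the $\Omega$-component $\Theta_t$ of that correction. Where you genuinely diverge is in how the second derivative of $H$ is organised. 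The paper never computes a second variation of the Ricci tensor: it differentiates once in $t$ the closed-form first variation $2D_{g_t,\Omega_t}H(\dot g_t,\dot\Omega_t) = (\Delta^{\Omega_t}_{g_t}-2\mathbbm{I})\dot\Omega^{\ast}_t - \tmop{div}^{\Omega_t}_{g_t}(\nabla^{\ast_{\Omega_t}}_{g_t}\dot g_t + d\dot\Omega^{\ast}_t) - \langle\dot g_t, h_t\rangle_{g_t}$, in which the curvature variation has already been reduced, via the contracted Bianchi identity, to the double-divergence term; consequently only the \emph{first}-variation lemmas of Section 3 (notably (\ref{var-div2})) and the first variation of $h$ are needed, and the weighted Lichnerowicz term $-\frac12\langle\mathcal{L}^{\Omega}_g v,v\rangle_g$ arises from the cross term $\langle\dot g_t,\dot h_t\rangle_{g_t}$, i.e.\ from the \emph{first} variation of $h$ paired against $\dot g$ --- not, as you propose, from $\frac{d^2}{dt^2}\tmop{Tr}_{g_t}h_t$ via Besse's second variation of $\tmop{Ric}(g)$. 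Your route is viable in principle but substantially heavier: you would have to rederive by hand the Bianchi-type cancellations that the first variation formula of $H$ already encodes, and the traced second variation of Ricci alone will not surface $\langle\mathcal{L}^{\Omega}_g v,v\rangle_g$ in the form stated. The remaining (and decisive) bookkeeping --- developing $\tmop{div}^{\Omega}\nabla_g^{\ast_{\Omega}}(\theta_0-\ddot g_0) = \tmop{div}^{\Omega}\nabla_g^{\ast_{\Omega}}[V^{\ast}_{\Omega}v^{\ast}_g - (v^{\ast}_g)^2]$ with the elementary weighted divergence identities (\ref{div-Ev}) and (\ref{div-Tr}) --- is deferred in your sketch, and that is where all the coefficients in the statement are actually pinned down.
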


\begin{proof}
We consider a smooth curve $(g_t, \Omega_t)_{t \in \mathbbm{R}} \subset
  \mathcal{M} \times \mathcal{V}_1$ with $(g_0, \Omega_0) = (g, \Omega)$ and
  with arbitrary speed $( \dot{g}_0, \dot{\Omega}_0) = (v, V)$. We showe in \cite{Pal2}
  that the $G$-covariant derivative $\nabla_G$ of its speed, in the speed direction, is given by the expressions
  \begin{eqnarray*}
    (\theta_t, \Theta_t) & \equiv & \nabla_G ( \dot{g}_t, \dot{\Omega}_t)  (
    \dot{g}_t, \dot{\Omega}_t),\\
    &  & \\
    \theta_t & \assign & \ddot{g}_t + \dot{g}_t \left( \dot{\Omega}^{\ast}_t -
    \dot{g}_t^{\ast} \right),\\
    &  & \\
    \Theta_t & \assign & \ddot{\Omega}_t + \frac{1}{4}  \left[ \left|
    \dot{g}_t \right|^2_t - 2 ( \dot{\Omega}^{\ast}_t)^2 - G_{g_t, \Omega_t} (
    \dot{g}_t, \dot{\Omega}_t ; \dot{g}_t, \dot{\Omega}_t) \right] \Omega_t .
  \end{eqnarray*}
  Then
  \begin{eqnarray*}
    \nabla_G D H (g_t, \Omega_t)  ( \dot{g}_t, \dot{\Omega}_t ;
    \dot{g}_t, \dot{\Omega}_t)
    & = &  \frac{d^2}{d t^2} H (g_t, \Omega_t) -  D_{g_t, \Omega_t} H
    (\theta_t, \Theta_t).
 \end{eqnarray*}   
  Using the first
  variation formula for $H$ in \cite{Pal2} we obtain the equalities
  \begin{eqnarray*}
    &  & 2 \nabla_G D H (g_t, \Omega_t)  ( \dot{g}_t, \dot{\Omega}_t ;
    \dot{g}_t, \dot{\Omega}_t)\\
    &  & \\
    & = & \frac{d}{d t}  \left[ (\Delta^{\Omega_t}_{g_t} - 2\mathbbm{I})
    \dot{\Omega}_t^{\ast} - \tmop{div}^{\Omega_t}_{g_t} \left(
    \nabla_{g_t}^{\ast_{\Omega_t}}  \dot{g}_t + d \dot{\Omega}_t^{\ast}
    \right)_{_{_{_{_{_{}}}}}} - \langle \dot{g}_t, h_t \rangle_{g_t} \right]
    \\
    &  & \\
    & - & (\Delta^{\Omega_t}_{g_t} - 2\mathbbm{I}) \Theta^{\ast}_t +
    \tmop{div}^{\Omega_t}_{g_t} \left( \nabla_{g_t}^{\ast_{\Omega_t}} \theta_t
    + d \Theta^{\ast}_t \right)_{_{_{_{_{_{}}}}}} + \langle \theta_t, h_t
    \rangle_{g_t}.
\end{eqnarray*}     
Using the identity (\ref{var-div2}) we obtain
\begin{eqnarray*}
    &  & 2 \nabla_G D H (g_t, \Omega_t)  ( \dot{g}_t, \dot{\Omega}_t ;
    \dot{g}_t, \dot{\Omega}_t)\\
    &  & \\
    & = & 2 (\Delta^{\Omega_t}_{g_t} -\mathbbm{I}) \left( \frac{d}{d t} 
    \dot{\Omega}_t^{\ast} - \Theta^{\ast}_t \right)\\
    &  & \\
    & + & 2 \langle \nabla_g d \dot{\Omega}_t^{\ast}, \dot{g}_t \rangle_{g_t}
    - 2 \langle \nabla_{g_t}^{\ast_{\Omega_t}}  \dot{g}^{\ast}_t +
    \nabla_{g_t}  \dot{\Omega}_t^{\ast}, \nabla_{g_t}  \dot{\Omega}_t^{\ast}
    \rangle_{g_t} \\
    &  & \\
    & + & \tmop{div}^{\Omega_t}_{g_t} \nabla_{g_t}^{\ast_{\Omega_t}}  \left(
    \theta_t - \ddot{g}_t \right) \\
    &  & \\
    & + & \frac{1}{4} \Delta^{\Omega_t}_{g_t}  | \dot{g}_t |^2_{g_t} +
    \left\langle \nabla_{g_t}^{\ast_{\Omega_t}} \widehat{\nabla_{g_t} 
    \dot{g}_t^{\ast} }, \dot{g}^{\ast}_t \right\rangle_{g_t} \noplus -
    \left\langle \widehat{\nabla_{g_t}  \dot{g}_t^{\ast} }, \nabla_{g_t} 
    \dot{g}^{\ast}_t \right\rangle_{g_t}\\
    &  & \\
    & - & 2 \nabla_{g_t}^{\ast_{\Omega_t}}  \dot{g}_t \cdot \left(
    \nabla_{g_t}^{\ast_{\Omega_t}}  \dot{g}_t^{\ast} + \nabla_{g_t} 
    \dot{\Omega}_t^{\ast} \right) + \left\langle \nabla_{g_t} \left( 2
    \nabla_{g_t}^{\ast_{\Omega_t}}  \dot{g}_t^{\ast} + \nabla_{g_t} 
    \dot{\Omega}_t^{\ast} \right), \dot{g}^{\ast}_t \right\rangle_{g_t}
    \noplus\\
    &  & \\
    & + & \tmop{Tr}_{\mathbbm{R}} \left[ \left( \theta^{\ast}_t - \frac{d}{d
    t}  \dot{g}_t^{\ast} \right) h_t^{\ast} - \dot{g}_t^{\ast} \left(
    \dot{h}^{\ast}_t - \dot{g}_t^{\ast} h^{\ast}_t \right) \right].
  \end{eqnarray*}
  Rearranging the previous expression, we obtain
  \begin{eqnarray*}
    &  & 2 \nabla_G D H (g_t, \Omega_t)  ( \dot{g}_t, \dot{\Omega}_t ;
    \dot{g}_t, \dot{\Omega}_t)\\
    &  & \\
    & = & 2 (\Delta^{\Omega_t}_{g_t} -\mathbbm{I}) \left( \frac{d}{d t} 
    \dot{\Omega}_t^{\ast} - \Theta^{\ast}_t \right) + \frac{1}{4}
    \Delta^{\Omega_t}_{g_t}  | \dot{g}_t |^2_{g_t}\\
    &  & \\
    & + & \langle \nabla_{g_t} d \dot{\Omega}_t^{\ast}, \dot{g}_t
    \rangle_{g_t} - 2 | \nabla_{g_t}^{\ast_{\Omega_t}}  \dot{g}^{\ast}_t +
    \nabla_{g_t}  \dot{\Omega}_t^{\ast} |_{g_t}^2 \\
    &  & \\
    & + & \tmop{div}^{\Omega_t}_{g_t} \nabla_{g_t}^{\ast_{\Omega_t}}  \left(
    \theta_t - \ddot{g}_t \right) \\
    &  & \\
    & + & \left\langle \nabla_{g_t}^{\ast_{\Omega_t}} \widehat{\nabla_{g_t} 
    \dot{g}_t^{\ast} }, \dot{g}^{\ast}_t \right\rangle_{g_t} \noplus -
    \left\langle \widehat{\nabla_{g_t}  \dot{g}_t^{\ast} }, \nabla_{g_t} 
    \dot{g}^{\ast}_t \right\rangle_{g_t}\\
    &  & \\
    & + & 2 \left\langle \nabla_{g_t} \left( \nabla_{g_t}^{\ast_{\Omega_t}} 
    \dot{g}_t^{\ast} + \nabla_{g_t}  \dot{\Omega}_t^{\ast} \right),
    \dot{g}^{\ast}_t \right\rangle_{g_t} \noplus\\
    &  & \\
    & + & \tmop{Tr}_{\mathbbm{R}} \left[ \left( \theta^{\ast}_t - \frac{d}{d
    t}  \dot{g}_t^{\ast} \right) h_t^{\ast} - \dot{g}_t^{\ast} \left(
    \dot{h}^{\ast}_t - \dot{g}_t^{\ast} h^{\ast}_t \right) \right] .
  \end{eqnarray*}
  Using the expression of $\theta_t$, we develop the term
  \begin{eqnarray*}
    \tmop{div}^{\Omega_t}_{g_t} \nabla_{g_t}^{\ast_{\Omega_t}}  \left(
    \theta_t - \ddot{g}_t \right) & = & \tmop{div}^{\Omega_t}
    \nabla_{g_t}^{\ast_{\Omega_t}}  \left[ \dot{\Omega}_t^{\ast} 
    \dot{g}^{\ast}_t - \left( \dot{g}^{\ast}_t \right)^2 \right] .
  \end{eqnarray*}
  For this purpose we remind a few elementary divergence type identities. For any smooth, function
  $u$, vector field $\xi$ and endomorphism section $A$ of $T_X$ holds the
  identities
  \begin{eqnarray*}
    \nabla_g^{\ast_{\Omega}} \left( u A \right) & = & - \; A \cdot \nabla_g u
    + u \nabla_g^{\ast_{\Omega}} A,\\
    &  & \\
    \tmop{div}^{\Omega} \left( u \xi \right) & = & \left\langle \nabla_g u,
    \xi \right\rangle_g + u \tmop{div}^{\Omega} \xi,\\
    &  & \\
    \nabla_g^{\ast_{\Omega}} A^2 & = & - \tmop{Tr}_g \left( \nabla_g A \cdot A
    \right) + A \nabla_g^{\ast_{\Omega}} A .
  \end{eqnarray*}
Furthermore if $A$ is $g$-symmetric then holds also the formulas
  \begin{equation}
    \label{div-Ev} \tmop{div}^{\Omega} \left( A \cdot \xi \right) = - \;
    \left\langle \nabla_g^{\ast_{\Omega}} A, \xi \right\rangle_g + \langle A,
    \nabla_g \xi \rangle_g,
  \end{equation}
  \begin{equation}
    \label{div-Tr} \tmop{div}^{\Omega} \tmop{Tr}_g \left( \nabla_g A \cdot A
    \right) = - \left\langle \nabla_g^{\ast_{\Omega}} \widehat{\nabla_g A}, A
    \right\rangle_g + \left\langle \widehat{\nabla_g A}, \nabla_g A
    \right\rangle_g .
  \end{equation}
  For readers convinience we show (\ref{div-Ev}) and (\ref{div-Tr}) in the appendix. Using the previous formulas we obtain the
  equalities
  \begin{eqnarray*}
    \tmop{div}^{\Omega_t} \nabla_{g_t}^{\ast_{\Omega_t}}  \left[
    \dot{\Omega}_t^{\ast}  \dot{g}^{\ast}_t - \left( \dot{g}^{\ast}_t
    \right)^2 \right] & = & \tmop{div}^{\Omega_t}  \left[ - \dot{g}^{\ast}_t
    \nabla_{g_t}  \dot{\Omega}_t^{\ast} \noplus + \dot{\Omega}_t^{\ast}
    \nabla_{g_t}^{\ast_{\Omega_t}}  \dot{g}^{\ast}_t \right]\\
    &  & \\
    & + & \tmop{div}^{\Omega_t}  \left[ \tmop{Tr}_{g_t} \left( \nabla_{g_t} 
    \dot{g}_t^{\ast} \cdot \dot{g}^{\ast}_t \right) \noplus - \dot{g}_t^{\ast}
    \nabla_{g_t}^{\ast_{\Omega_t}}  \dot{g}^{\ast}_t \right]\\
    &  & \\
    & = & - \; \langle \nabla_{g_t} d \dot{\Omega}_t^{\ast}, \dot{g}_t
    \rangle_{g_t} + 2 \langle \nabla_{g_t}^{\ast_{\Omega_t}} 
    \dot{g}^{\ast}_t, \nabla_{g_t}  \dot{\Omega}_t^{\ast} \rangle_{g_t}\\
    &  & \\
    & + & \dot{\Omega}_t^{\ast} \tmop{div}^{\Omega_t}
    \nabla_{g_t}^{\ast_{\Omega_t}}  \dot{g}^{\ast}_t\\
    &  & \\
    & - & \left\langle \nabla_{g_t}^{\ast_{\Omega_t}} \widehat{\nabla_{g_t} 
    \dot{g}_t^{\ast} }, \dot{g}^{\ast}_t \right\rangle_{g_t} \noplus +
    \left\langle \widehat{\nabla_{g_t}  \dot{g}_t^{\ast} }, \nabla_{g_t} 
    \dot{g}^{\ast}_t \right\rangle_{g_t}\\
    &  & \\
    & + & | \nabla_{g_t}^{\ast_{\Omega_t}}  \dot{g}^{\ast}_t |_{g_t}^2 -
    \left\langle \dot{g}^{\ast}_t, \nabla_{g_t} \nabla_{g_t}^{\ast_{\Omega_t}}
    \dot{g}_t^{\ast} \right\rangle_{g_t} \noplus\\
    &  & \\
    & = & - \left\langle \nabla_{g_t} \left( \nabla_{g_t}^{\ast_{\Omega_t}} 
    \dot{g}_t^{\ast} + \nabla_{g_t}  \dot{\Omega}_t^{\ast} \right),
    \dot{g}^{\ast}_t \right\rangle_{g_t} \noplus\\
    &  & \\
    & + & \langle \nabla_{g_t}^{\ast_{\Omega_t}}  \dot{g}^{\ast}_t,
    \nabla_{g_t}^{\ast_{\Omega_t}}  \dot{g}^{\ast}_t + 2 \nabla_{g_t} 
    \dot{\Omega}_t^{\ast} \rangle_{g_t} \\
    &  & \\
    & + & \dot{\Omega}_t^{\ast} \tmop{div}^{\Omega_t}
    \nabla_{g_t}^{\ast_{\Omega_t}}  \dot{g}^{\ast}_t\\
    &  & \\
    & - & \left\langle \nabla_{g_t}^{\ast_{\Omega_t}} \widehat{\nabla_{g_t} 
    \dot{g}_t^{\ast} }, \dot{g}^{\ast}_t \right\rangle_{g_t} \noplus +
    \left\langle \widehat{\nabla_{g_t}  \dot{g}_t^{\ast} }, \nabla_{g_t} 
    \dot{g}^{\ast}_t \right\rangle_{g_t} .
  \end{eqnarray*}
  Pluging this identity in the last expression of the Hessian of $H$ we obtain
  \begin{eqnarray*}
    &  & 2 \nabla_G D H (g_t, \Omega_t)  ( \dot{g}_t, \dot{\Omega}_t ;
    \dot{g}_t, \dot{\Omega}_t)\\
    &  & \\
    & = & 2 (\Delta^{\Omega_t}_{g_t} -\mathbbm{I}) \left( \frac{d}{d t} 
    \dot{\Omega}_t^{\ast} - \Theta^{\ast}_t \right) + \frac{1}{4}
    \Delta^{\Omega_t}_{g_t}  | \dot{g}_t |_{g_t}^2\\
    &  & \\
    & + & \langle \nabla_{g_t} d \dot{\Omega}_t^{\ast}, \dot{g}_t
    \rangle_{g_t} - 2 | \nabla_{g_t}^{\ast_{\Omega_t}}  \dot{g}^{\ast}_t +
    \nabla_{g_t}  \dot{\Omega}_t^{\ast} |_{g_t}^2 \\
    &  & \\
    & + & \left\langle \nabla_{g_t} \left( \nabla_{g_t}^{\ast_{\Omega_t}} 
    \dot{g}_t^{\ast} + \nabla_{g_t}  \dot{\Omega}_t^{\ast} \right),
    \dot{g}^{\ast}_t \right\rangle_{g_t} \noplus\\
    &  & \\
    & + & \langle \nabla_{g_t}^{\ast_{\Omega_t}}  \dot{g}^{\ast}_t,
    \nabla_{g_t}^{\ast_{\Omega_t}}  \dot{g}^{\ast}_t + 2 \nabla_{g_t} 
    \dot{\Omega}_t^{\ast} \rangle_{g_t} + \dot{\Omega}_t^{\ast}
    \tmop{div}^{\Omega_t} \nabla_{g_t}^{\ast_{\Omega_t}}  \dot{g}^{\ast}_t\\
    &  & \\
    & + & \dot{\Omega}_t^{\ast} \langle \dot{g}_t, h_t \rangle_{g_t} -
    \frac{1}{2}  \left\langle \mathcal{L}^{\Omega_t}_{g_t}  \dot{g}_t -
    L_{\nabla_{g_t}^{\ast_{\Omega_t}}  \dot{g}_t^{\ast} + \nabla_{g_t} 
    \dot{\Omega}^{\ast}_t} g_t, \dot{g}_t \right\rangle_{g_t},
  \end{eqnarray*}
  thanks to the variation formula of $h$ in \cite{Pal2}. Rearranging the previous expresion, we infer
  \begin{eqnarray*}
    &  & 2 \nabla_G D H (g_t, \Omega_t)  ( \dot{g}_t, \dot{\Omega}_t ;
    \dot{g}_t, \dot{\Omega}_t)\\
    &  & \\
    & = & 2 (\Delta^{\Omega_t}_{g_t} -\mathbbm{I}) \left( \frac{d}{d t} 
    \dot{\Omega}_t^{\ast} - \Theta^{\ast}_t \right) + \frac{1}{4}
    \Delta^{\Omega_t}_{g_t}  | \dot{g}_t |_{g_t}^2\\
    &  & \\
    & - & 2 | \nabla_{g_t}^{\ast_{\Omega_t}}  \dot{g}^{\ast}_t + \nabla_{g_t}
    \dot{\Omega}_t^{\ast} |_{g_t}^2 \\
    &  & \\
    & + & \left\langle \nabla_{g_t} \left( 2 \nabla_{g_t}^{\ast_{\Omega_t}} 
    \dot{g}_t^{\ast} + 3 \nabla_{g_t}  \dot{\Omega}_t^{\ast} \right),
    \dot{g}^{\ast}_t \right\rangle_{g_t} \noplus\\
    &  & \\
    & + & \langle \nabla_{g_t}^{\ast_{\Omega_t}}  \dot{g}^{\ast}_t,
    \nabla_{g_t}^{\ast_{\Omega_t}}  \dot{g}^{\ast}_t + 2 \nabla_{g_t} 
    \dot{\Omega}_t^{\ast} \rangle_{g_t}\\
    &  & \\
    & + & \dot{\Omega}_t^{\ast} \left(_{_{_{_{_{_{}}}}}}
    \tmop{div}^{\Omega_t} \nabla_{g_t}^{\ast_{\Omega_t}}  \dot{g}^{\ast}_t +
    \langle \dot{g}_t, h_t \rangle_{g_t} \right) - \frac{1}{2}  \left\langle
    \mathcal{L}^{\Omega_t}_{g_t}  \dot{g}_t, \dot{g}_t \right\rangle_{g_t} .
  \end{eqnarray*}
  Then the conclusion follows from the expression of $\Theta_t$.
\end{proof}

In \cite{Pal2} we show that the space $G$-orthogonal to the tangent to the
orbit of a point $(g, \Omega) \in \mathcal{M} \times \mathcal{V}_1$, under the action of the identity component of the diffeomorphism group, is
\begin{eqnarray*}
  \mathbbm{F}_{g, \Omega} & \assign & \left\{ (v, V) \in T_{\mathcal{M} \times
  \mathcal{V}_1} \mid \nabla_g^{\ast_{\Omega}} v_g^{\ast} + \nabla_g
  V^{\ast}_{\Omega} = 0 \right\} .
\end{eqnarray*}
\begin{corollary}
  \label{corol-sec-varH}The Hessian form $\nabla_G D H (g, \Omega)$ of Perelman's map \\$(g,\Omega)\in\mathcal{M} \times \mathcal{V}_1\longmapsto H_{g,\Omega}$, with respect to the pseudo-Riemannian structure $G$ at the
  point $(g, \Omega) \in \mathcal{M} \times \mathcal{V}_1$ in arbitrary
  directions $(v, V) \in \mathbbm{F}_{g, \Omega}$, is given by the expression
  \begin{eqnarray*}
   & & 2 \nabla_G D H (g, \Omega) (v, V ; v, V) \\
    &  & \\
   & = & - \frac{1}{2}  \left\langle
    \left( \mathcal{L}^{\Omega}_g + 2 \nabla_{g_{}} \nabla_g^{\ast_{\Omega}}
    \right) v, v \right\rangle_g\\
    &  & \\
    & - & \frac{1}{2}  \left( \Delta^{\Omega}_g - 2\mathbbm{I} \right) \left[
    \frac{1}{2}  \left| v \right|^2_g + (V_{\Omega}^{\ast})^2 - \frac{1}{2}
    G_{g, \Omega} \left( v, V ; v, V \right)\right] \\
    &  & \\
    & + & V^{\ast}_{\Omega} \langle v, h_{g, \Omega} \rangle_{g_{}} \noplus
    \noplus .
  \end{eqnarray*}
\end{corollary}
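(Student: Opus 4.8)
The plan is to specialise the Hessian formula of the preceding Lemma to directions $(v, V) \in \mathbbm{F}_{g, \Omega}$, that is, to impose the defining constraint $\nabla_g^{\ast_{\Omega}} v_g^{\ast} + \nabla_g V^{\ast}_{\Omega} = 0$ and to reassemble the surviving terms. The first effect of the constraint is immediate: the term $- 2 | \nabla_g^{\ast_{\Omega}} v_g^{\ast} + \nabla_g V^{\ast}_{\Omega} |^2_g$ in the Lemma vanishes identically. In every remaining term I would substitute $\nabla_g V^{\ast}_{\Omega} = - \nabla_g^{\ast_{\Omega}} v_g^{\ast}$, and then collect.

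Next I would treat the two inner-product terms that carry mixed combinations of $\nabla_g^{\ast_{\Omega}} v_g^{\ast}$ and $\nabla_g V^{\ast}_{\Omega}$. Under the constraint $2 \nabla_g^{\ast_{\Omega}} v_g^{\ast} + 3 \nabla_g V^{\ast}_{\Omega} = - \nabla_g^{\ast_{\Omega}} v_g^{\ast}$, so the term $\langle \nabla_g ( 2 \nabla_g^{\ast_{\Omega}} v_g^{\ast} + 3 \nabla_g V^{\ast}_{\Omega} ), v_g^{\ast} \rangle_g$ collapses to $- \langle \nabla_g \nabla_g^{\ast_{\Omega}} v_g^{\ast}, v_g^{\ast} \rangle_g$; after raising indices on both slots this equals $- \langle \nabla_g \nabla_g^{\ast_{\Omega}} v, v \rangle_g$, which is precisely the $2 \nabla_g \nabla_g^{\ast_{\Omega}}$ contribution to be absorbed into the Lichnerowicz operator, yielding $- \tfrac{1}{2} \langle ( \mathcal{L}^{\Omega}_g + 2 \nabla_g \nabla_g^{\ast_{\Omega}} ) v, v \rangle_g$. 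Similarly $\nabla_g^{\ast_{\Omega}} v_g^{\ast} + 2 \nabla_g V^{\ast}_{\Omega} = - \nabla_g^{\ast_{\Omega}} v_g^{\ast}$, so the term $\langle \nabla_g^{\ast_{\Omega}} v_g^{\ast}, \nabla_g^{\ast_{\Omega}} v_g^{\ast} + 2 \nabla_g V^{\ast}_{\Omega} \rangle_g$ collapses to $- | \nabla_g^{\ast_{\Omega}} v_g^{\ast} |^2_g = - | \nabla_g V^{\ast}_{\Omega} |^2_g$. Finally, using $\tmop{div}^{\Omega} \nabla_g u = - \Delta^{\Omega}_g u$ together with the constraint, $\tmop{div}^{\Omega} \nabla_g^{\ast_{\Omega}} v^{\ast}_g = \tmop{div}^{\Omega} ( - \nabla_g V^{\ast}_{\Omega} ) = \Delta^{\Omega}_g V^{\ast}_{\Omega}$, so the term $V^{\ast}_{\Omega} ( \tmop{div}^{\Omega} \nabla_g^{\ast_{\Omega}} v^{\ast}_g + \langle v, h_{g, \Omega} \rangle_g )$ becomes $V^{\ast}_{\Omega} \Delta^{\Omega}_g V^{\ast}_{\Omega} + V^{\ast}_{\Omega} \langle v, h_{g, \Omega} \rangle_g$, whose second summand is already the announced $V^{\ast}_{\Omega} \langle v, h_{g, \Omega} \rangle_g$.

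It then remains to reorganise the scalar terms into the claimed $( \Delta^{\Omega}_g - 2 \mathbbm{I} )$ form. The Laplacian and zeroth-order scalar terms of the Lemma supply $- \tfrac{1}{4} \Delta^{\Omega}_g | v |^2_g - \Delta^{\Omega}_g ( V^{\ast}_{\Omega} )^2 + \tfrac{1}{2} | v |^2_g + ( V^{\ast}_{\Omega} )^2 - \tfrac{1}{2} G_{g, \Omega} ( v, V ; v, V )$, where I would observe that $G_{g, \Omega} ( v, V ; v, V )$ is constant on $X$ (it enters the construction only as a number multiplying the volume form in $\Theta_t$), so that $\Delta^{\Omega}_g$ annihilates it. The key step is the weighted Leibniz identity $\tfrac{1}{2} \Delta^{\Omega}_g ( V^{\ast}_{\Omega} )^2 = V^{\ast}_{\Omega} \Delta^{\Omega}_g V^{\ast}_{\Omega} - | \nabla_g V^{\ast}_{\Omega} |^2_g$, which lets me fuse the leftover $V^{\ast}_{\Omega} \Delta^{\Omega}_g V^{\ast}_{\Omega}$ and $- | \nabla_g V^{\ast}_{\Omega} |^2_g$ into $\tfrac{1}{2} \Delta^{\Omega}_g ( V^{\ast}_{\Omega} )^2$, thereby halving the coefficient of $\Delta^{\Omega}_g ( V^{\ast}_{\Omega} )^2$ from $1$ to $\tfrac{1}{2}$. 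Collecting all scalar contributions then reproduces exactly $- \tfrac{1}{2} ( \Delta^{\Omega}_g - 2 \mathbbm{I} ) [ \tfrac{1}{2} | v |^2_g + ( V^{\ast}_{\Omega} )^2 - \tfrac{1}{2} G_{g, \Omega} ( v, V ; v, V ) ]$, completing the identity. The main obstacle is purely the coefficient bookkeeping: one must track every factor so that the surviving scalar pieces assemble into the operator $( \Delta^{\Omega}_g - 2 \mathbbm{I} )$, and the delicate point is the Leibniz step, where the factor $\tfrac{1}{2}$ in front of $\Delta^{\Omega}_g ( V^{\ast}_{\Omega} )^2$ is generated.
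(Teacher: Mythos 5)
Your proposal is correct and is exactly the route the paper intends: the corollary is a direct specialisation of the preceding Hessian lemma to directions satisfying $\nabla_g^{\ast_{\Omega}} v_g^{\ast} + \nabla_g V^{\ast}_{\Omega} = 0$, with the terms recombined precisely as you describe. In particular your identification of the delicate step — the weighted Leibniz identity $\tfrac{1}{2}\Delta^{\Omega}_g (V^{\ast}_{\Omega})^2 = V^{\ast}_{\Omega}\Delta^{\Omega}_g V^{\ast}_{\Omega} - |\nabla_g V^{\ast}_{\Omega}|^2_g$, which halves the coefficient of $\Delta^{\Omega}_g (V^{\ast}_{\Omega})^2$ — together with the observation that $G_{g,\Omega}(v,V;v,V)$ is a constant killed by $\Delta^{\Omega}_g$, is exactly what makes the bookkeeping close.
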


\section{Application of the weighted Bochner identity}

We observe that the formal adjoint of the $\overline{\partial}_{T_{X, J}}$
operator with respect to the hermitian product
\begin{equation}
  \label{L2omOm-prod}  \left\langle \cdot, \cdot \right\rangle_{\omega,
  \Omega} \assign \int_X \left\langle \cdot, \cdot \right\rangle_{\omega}
  \Omega,
\end{equation}
is the operator
\begin{eqnarray*}
  \overline{\partial}^{\ast_{g, \Omega}}_{T_{X, J}} & : = & e^f
  \overline{\partial}^{\ast_g}_{T_{X, J}} \left( e^{- f} \bullet \right) .
\end{eqnarray*}
With this notation, we define the anti-holomorphic $\Omega$-Hodge-Witten
Laplacian operator acting on $T_X$-valued $q$-forms as
\begin{eqnarray*}
  \Delta^{\Omega, - J}_{T_{X, g}} & \assign & \frac{1}{q} 
  \overline{\partial}_{T_{X, J}} \overline{\partial}^{\ast_{g, \Omega}}_{T_{X,
  J}} + \frac{1}{q + 1}  \overline{\partial}^{\ast_{g, \Omega}}_{T_{X, J}}
  \overline{\partial}_{T_{X, J}},
\end{eqnarray*}
with the usual convention $\infty \cdot 0 = 0$, and the functorial convention on the scalar product in \cite{Pal1}. We will omit the symbol
$\Omega$ in the Hodge-Witten
Laplacian operator, when $\Omega = \tmop{Cst} d V_g$. We define the vector space
\begin{eqnarray*}
  \mathcal{H}_{g, \Omega}^{0, 1} \left( T_{X, J} \right) & \assign &
  \tmop{Ker} \Delta^{\Omega, - J}_{T_{X, g}} \cap C^{\infty} \left( X,
  T^{\ast}_{X, - J} \otimes T_{X, J} \right).
\end{eqnarray*}
It has been showed in \cite{Pal2}, that for any smooth $J$-anti-linear endomorphism
section $A$ of the tangent bundle holds the fundamental Bochner type formula
\begin{equation}
  \label{dec-Lich2} \mathcal{L}^{\Omega}_g A = 2 \Delta^{- J}_{T_{X, g}} A +
  [\tmop{Ric}^{\ast} (g), A] + \nabla_g f \neg \nabla_g A .
\end{equation}
We observe that for bidegree reasons holds the equalities
\begin{eqnarray*}
  \overline{\partial}^{\ast_{g, \Omega}}_{T_{X, J}} A & = &
  \nabla_g^{\ast_{\Omega}} A\\
  &  & \\
  & = & \nabla^{\ast}_g A + A \nabla_g f\\
  &  & \\
  & = & \overline{\partial}^{\ast_g}_{T_{X, J}} A + A \nabla_g f .
\end{eqnarray*}
Using the last equality we obtain the expression
\begin{eqnarray*}
  \overline{\partial}_{T_{X, J}} \overline{\partial}^{\ast_{g, \Omega}}_{T_{X,
  J}} A & = & \overline{\partial}_{T_{X, J}}
  \overline{\partial}^{\ast_g}_{T_{X, J}} A + \nabla^{0, 1}_{g, J} A \nabla_g
  f + A \partial^g_{T_{X, J}} \nabla_g f .
\end{eqnarray*}
We observe indeed
\begin{eqnarray*}
  2 \overline{\partial}_{T_{X, J}}  \left( A \nabla_g f \right) & = & \nabla_g
  \left( A \nabla_g f \right) + J \nabla_{g, J \cdot}  \left( A \nabla_g f
  \right)\\
  &  & \\
  & = & \nabla_g A \nabla_g f + A \nabla^2_g f + J \nabla_{g, J \cdot} A
  \nabla_g f + J A \nabla_{g, J \cdot} \nabla_g f\\
  &  & \\
  & = & 2 \nabla^{0, 1}_{g, J} A \nabla_g f + A \left( \nabla^2_g f - J
  \nabla_{g, J \cdot} \nabla_g f \right)\\
  &  & \\
  & = & 2 \nabla^{0, 1}_{g, J} A \nabla_g f + 2 A \partial^g_{T_{X, J}}
  \nabla_g f .
\end{eqnarray*}
For bidegree reasons holds also the identites
\begin{eqnarray*}
  \frac{1}{2}  \overline{\partial}^{\ast_{g, \Omega}}_{T_{X, J}}
  \overline{\partial}_{T_{X, J}} A & = & \frac{1}{2}
  \nabla^{\ast_{\Omega}}_{T_X, g} \overline{\partial}_{T_{X, J}} A\\
  &  & \\
  & = & \nabla^{\ast_{\Omega}}_g \overline{\partial}_{T_{X, J}} A\\
  &  & \\
  & = & \nabla^{\ast}_g \overline{\partial}_{T_{X, J}} A + \nabla_g f \neg
  \overline{\partial}_{T_{X, J}} A .
  \end{eqnarray*}
  Thus
\begin{eqnarray*}
  \frac{1}{2}  \overline{\partial}^{\ast_{g, \Omega}}_{T_{X, J}}
  \overline{\partial}_{T_{X, J}} A 
  & = & \frac{1}{2} \nabla^{\ast}_{T_X, g} \overline{\partial}_{T_{X, J}} A +
  \nabla_g f \neg \overline{\partial}_{T_{X, J}} A\\
  &  & \\
  & = & \frac{1}{2}  \overline{\partial}^{\ast_g}_{T_{X, J}}
  \overline{\partial}_{T_{X, J}} A + \nabla_g f \neg \nabla^{0, 1}_{g, J} A -
  \nabla^{0, 1}_{g, J} A \nabla_g f .
\end{eqnarray*}
Combining the identities obtained so far we deduce the expression
\begin{equation}
  \label{Om-AntHol-Hdg-Lap} \Delta^{\Omega, - J}_{T_{X, g}} A = \Delta^{-
  J}_{T_{X, g}} A + \nabla_g f \neg \nabla^{0, 1}_{g, J} A + A
  \partial^g_{T_{X, J}} \nabla_g f .
\end{equation}
Plugging this in the fundamental identity (\ref{dec-Lich2}) we obtain the
equalities
\begin{eqnarray*}
  \mathcal{L}^{\Omega}_g A & = & 2 \Delta^{\Omega, - J}_{T_{X, g}} A +
  [\tmop{Ric}^{\ast} (g), A] - 2 A \partial^g_{T_{X, J}} \nabla_g f\\
  &  & \\
  & - & \nabla_g f \neg \left( \nabla^{0, 1}_{g, J} - \nabla^{1, 0}_{g, J}
  \right) A\\
  &  & \\
  & = & 2 \Delta^{\Omega, - J}_{T_{X, g}} A + [\tmop{Ric}^{\ast} (g), A] - 2
  A \partial^g_{T_{X, J}} \nabla_g f\\
  &  & \\
  & - & (J \nabla_g f) \neg J \nabla_g A ._{}
\end{eqnarray*}
Thus, if $A \in \mathcal{H}_{g, \Omega}^{0, 1} \left( T_{X, J} \right)$, then
holds the stability identity
\begin{equation}
  \label{stab-harm}  \left\langle \mathcal{L}^{\Omega}_g A, A \right\rangle_g
  = - 2 \left\langle \nabla^2_g f, A^2 \right\rangle_g + \left\langle J
  \nabla_g f \neg \nabla_g A, J A \right\rangle_g .
\end{equation}

\section{Variations of $\omega$-compatible complex structures}

Let $\left( X, J, g, \omega \right)$ be a Fano manifold such that $\omega =
\tmop{Ric}_{_J} \left( \Omega \right)$, with $\Omega \in \mathcal{V}_1$ and
let $\left( J_t \right)_t \subset \mathcal{J}_{\omega}$ be a smooth curve such
that $J_0 = J$. We differentiate the definition $g_t \assign - \omega J_t$. We
obtain $\dot{g}^{\ast}_t = - J_t  \dot{J}_t$ and $\ddot{g}^{\ast}_t = - J_t 
\ddot{J}_t$. On the other hand, deriving twice the condition $J^2_t =
-\mathbbm{I}$, we obtain $- ( J_t  \ddot{J}_t)_{J_t}^{1, 0} =
\dot{J}_t^2$ and thus $\left( \ddot{g}^{\ast}_t \right)_{J_t}^{1, 0} = \left(
\dot{g}^{\ast}_t \right)^2$. The latter gives
\[ \left( \ddot{g}^{\ast}_t \right)_{J_t}^{0, 1} = \ddot{g}_t^{\ast} - \left(
   \dot{g}^{\ast}_t \right)^2_t = \frac{d}{d t}  \dot{g}^{\ast}_t . \]
For any endomorphism $A$ of the tangent bundle and for any bilinear form $B$
over it we define the contraction operation $A \neg B \assign \tmop{Alt}
\left( B \circ A \right)$, where $\tmop{Alt}$ is the alternating operator and
the composition operator $\circ$ act on the first entry of $B$. Let $N_J$ be
the Nijenhuis tensor of an arbitrary almost complex structure $J$. Then the
general formula
\begin{eqnarray*}
  \frac{d}{d t} N_{J_t} & = & \dot{J}_t \neg N_{J_t} - \dot{J}_t N_{J_t}
  \noplus + \overline{\partial}_{T_{X, J_t}}  \dot{J}_t,
\end{eqnarray*}
(see a computation in \cite{Pal1}), implies $\overline{\partial}_{T_{X, J_t}} 
\dot{J}_t \equiv 0$, in our case. Thus time deriving the identity
\begin{eqnarray*}
  \frac{d}{d t}  \overline{\partial}_{T_{X, J_t}}  \dot{g}^{\ast}_t & \equiv &
  0,
\end{eqnarray*}
we obtain the property
\begin{equation}
  \overline{\partial}_{T_{X, J_t}}  \frac{d}{d t}  \dot{g}^{\ast}_t =
  \dot{g}^{\ast}_t \neg \nabla^{1, 0}_{g_t, J_t}  \dot{g}^{\ast}_t .
  \label{sec-ord-Defm}
\end{equation}
Indeed we prove the variation formula
\begin{eqnarray*}
  \left( \frac{d}{d t}  \overline{\partial}_{T_{X, J_t}} \right) 
  \dot{g}^{\ast}_t & = & - \dot{g}^{\ast}_t \neg \nabla^{1, 0}_{g_t, J_t} 
  \dot{g}^{\ast}_t .
\end{eqnarray*}
For this purpose we expand the derivative of $\overline{\partial}_{T_{X,
J_t}}$ acting on a smooth endomorphism section $A$ of $T_X$. We obtain
\begin{eqnarray*}
  2 \left[ \left( \frac{d}{d t}  \overline{\partial}_{T_{X, J_t}} \right) A
  \right] \left( \xi, \eta \right) & = & 2 \frac{d}{d t} \tmop{Alt} \left[
  \nabla^{0, 1}_{g_t, J_t} A \left( \xi, \eta \right) \right]\\
  &  & \\
  & = & \tmop{Alt} \frac{d}{d t}  \left[ \nabla_{g_t} A \left( \xi, \eta
  \right) + J_t \nabla_{g_t} A \left( J_t \xi, \eta \right) \right]\\
  &  & \\
  & = & \tmop{Alt} \left[ \dot{\nabla}_{g_t} A \left( \xi, \eta \right) +
  \dot{J}_t \nabla_{g_t} A \left( J_t \xi, \eta \right) \right]\\
  &  & \\
  & + & \tmop{Alt} \left[ J_t \dot{\nabla}_{g_t} A \left( J_t \xi, \eta
  \right) + J_t \nabla_{g_t} A \left( \dot{J}_t \xi, \eta \right) \right] .
\end{eqnarray*}
Using the variation formula 
$$
\dot{\nabla}_{g_t} A \left( \xi, \eta \right) =
\dot{\nabla}_{g_t} \left( \xi, A \eta \right) - A \dot{\nabla}_{g_t} \left(
\xi, \eta \right),
$$ 
and the fact that the bilinear form $\dot{\nabla}_{g_t}$ is
symmetric we deduce the formula
\begin{eqnarray*}
 & & 2 \left[ \left( \frac{d}{d t}  \overline{\partial}_{T_{X, J_t}} \right) A
  \right] \left( \xi, \eta \right) \\
  & &\\
  & = & \tmop{Alt} \left[ \dot{\nabla}_{g_t}
  \left( \xi, A \eta \right) + \dot{J}_t \nabla_{g_t} A \left( J_t \xi, \eta
  \right) \right]\\
  &  & \\
  & + & \tmop{Alt} \left[ J_t \dot{\nabla}_{g_t} \left( J_t \xi, A \eta
  \right) - J_t A \dot{\nabla}_{g_t} \left( J_t \xi, \eta \right) + J_t
  \nabla_{g_t} A \left( \dot{J}_t \xi, \eta \right) \right] .
\end{eqnarray*}
We remind now (see \cite{Pal1}), that time deriving the K\"{a}hler condition
$\nabla_{g_t} J_t \equiv 0$, we obtain the identity
\[ \dot{\nabla}_{g_t} (\eta, \xi) \hspace{0.75em} + \hspace{0.75em} J_t
   \dot{\nabla}_{g_t} (J_t \eta, \xi) \hspace{0.75em} + \hspace{0.75em} J_t
   \nabla_{g_t}  \dot{J}_t (\xi, \eta) \hspace{0.75em} = \hspace{0.75em} 0
   \hspace{0.25em}, \]
Using this in the previous formula with $A = \dot{g}^{\ast}_t = - J_t 
\dot{J}_t$ we obtain
\begin{eqnarray*}
& &  2 \left[ \left( \frac{d}{d t}  \overline{\partial}_{T_{X, J_t}} \right) 
  \dot{g}^{\ast}_t \right] \left( \xi, \eta \right) \\
  &  & \\
  & = & \tmop{Alt} \left[ -
  J_t \nabla_{g_t}  \dot{J}_t \left( \dot{g}^{\ast}_t \eta, \xi \right) -
  \dot{g}^{\ast}_t J_t \nabla_{g_t}  \dot{J}_t (\eta, \xi) \right]\\
  &  & \\
  & + & \tmop{Alt} \left[ \dot{J}_t \nabla_{g_t}  \dot{g}^{\ast}_t \left( J_t
  \xi, \eta \right) + J_t \nabla_{g_t}  \dot{g}^{\ast}_t \left( \dot{J}_t \xi,
  \eta \right) \right]\\
  &  & \\
  & = & \tmop{Alt} \left[ \nabla_{g_t}  \dot{g}^{\ast}_t \left(
  \dot{g}^{\ast}_t \eta, \xi \right) + \dot{J}_t \nabla_{g_t} 
  \dot{g}^{\ast}_t \left( J_t \xi, \eta \right) + J_t \nabla_{g_t} 
  \dot{g}^{\ast}_t \left( \dot{J}_t \xi, \eta \right) \right]\\
  &  & \\
  & - & \dot{g}^{\ast}_t \partial^{g_t}_{T_{X, J_t}}  \dot{g}^{\ast}_t \left(
  \xi, \eta \right)\\
  &  & \\
  & = & \tmop{Alt} \left[ \nabla^{1, 0}_{g_t, J_t}  \dot{g}^{\ast}_t \left(
  \dot{g}^{\ast}_t \eta, \xi \right) - \nabla^{1, 0}_{g_t}  \dot{g}^{\ast}_t
  \left( \dot{g}^{\ast}_t \xi, \eta \right) \right]\\
  &  & \\
  & = & - 2 \left[ \dot{g}^{\ast}_t \neg \nabla^{1, 0}_{g_t, J_t} 
  \dot{g}^{\ast}_t \right] \left( \xi, \eta \right),
\end{eqnarray*}
and thus the required formula. The latter can also be obtained deriving the
Maurer-Cartan equation, which writes in the K\"ahler case (see the appendix)
as
\begin{eqnarray*}
  \overline{\partial}_{T_{X, J}} \mu_t + \mu_t \neg \nabla^{1, 0}_{g, J} \mu_t
  & = & 0,
\end{eqnarray*}
with $\mu_t$ the Caley transform of $J_t$ with respect to $J$.

We remind now (see \cite{Pal2}), that for any smooth family $\left( g_t, \Omega_t \right)_t
\subset \mathcal{S}_{\omega}$, holds the identity
\begin{eqnarray*}
  \Delta^{\Omega_t, - J_t}_{T_{X, g_t}}  \dot{g}^{\ast}_t & = & \left(
  \Delta^{\Omega_t, - J_t}_{T_{X, g_t}}  \dot{g}^{\ast}_t \right)_{g_t}^T,
\end{eqnarray*}
with $J_t \assign - \omega^{- 1} g_t$. The latter rewrites as
\begin{equation}
  \label{basic-kuranishSym}  \overline{\partial}_{T_{X, J_t}}
  \nabla_{g_t}^{\ast_{\Omega_t}}  \dot{g}^{\ast}_t = \left(
  \overline{\partial}_{T_{X, J_t}} \nabla_{g_t}^{\ast_{\Omega_t}} 
  \dot{g}^{\ast}_{t_{_{}}} \right)_{g_t}^T .
\end{equation}
\begin{lemma}
  For any smooth family $\left( g_t, \Omega_t \right)_t \subset
  \mathcal{S}_{\omega}$, with $\left( g, \Omega \right) = \left( g_0, \Omega_0
  \right)$ and $( \dot{g}_0, \dot{\Omega}_0) \in
  \mathbbm{F}^J_{g, \Omega}$, holds the symmetry property
  \begin{equation}
    \label{first-kur-sm}  \overline{\partial}_{T_{X, J_t}}
    \nabla_{g_t}^{\ast_{\Omega_t}}  \frac{d}{d t} _{\mid_{t = 0}} 
    \dot{g}^{\ast}_t = \left( \overline{\partial}_{T_{X, J_t}}
    \nabla_{g_t}^{\ast_{\Omega_t}}  \frac{d}{d t} _{\mid_{t = 0}} 
    \dot{g}^{\ast}_{t_{_{}}} \right)_g^T + \left[ \partial^g_{T_{X, J}}
    \nabla_g^{\ast_{\Omega}}  \dot{g}^{\ast}_0, \dot{g}^{\ast}_0 \right] .
  \end{equation}
\end{lemma}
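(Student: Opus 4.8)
The plan is to differentiate the pointwise symmetry (\ref{basic-kuranishSym}), which holds along the whole family $(g_t, \Omega_t)_t \subset \mathcal{S}_{\omega}$, at $t = 0$, and to read off the piece in which the $t$-derivative falls on the argument $\dot{g}^{\ast}_t$ rather than on the operators. Set $\Psi_t \assign \overline{\partial}_{T_{X, J_t}} \nabla_{g_t}^{\ast_{\Omega_t}} \dot{g}^{\ast}_t$ and $\beta \assign \nabla_g^{\ast_{\Omega}} \dot{g}^{\ast}_0$, so that (\ref{basic-kuranishSym}) reads $\Psi_t = (\Psi_t)_{g_t}^T$; in particular $\Psi_0$ is $g$-symmetric and $\Psi_0 = \overline{\partial}_{T_{X, J}} \beta$.

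For the right-hand side I would use the elementary variation of the transpose. For a smooth family of endomorphisms $(B_t)_t$ of $T_X$ with $B_0$ being $g$-symmetric, differentiating $g_t \big( (B_t)_{g_t}^T \xi, \eta \big) = g_t (\xi, B_t \eta)$ and using that $\dot{g}^{\ast}_0 = g^{-1} \dot{g}_0$ is $g$-symmetric yields
\[ \left. \frac{d}{dt} \right|_{t = 0} (B_t)_{g_t}^T = (\dot{B}_0)_g^T + [B_0, \dot{g}^{\ast}_0], \qquad [B_0, \dot{g}^{\ast}_0] = B_0 \dot{g}^{\ast}_0 - \dot{g}^{\ast}_0 B_0 . \]
Applied with $B_t = \Psi_t$ this gives $\dot{\Psi}_0 = (\dot{\Psi}_0)_g^T + [\Psi_0, \dot{g}^{\ast}_0]$, i.e. the $g$-antisymmetric part of $\dot{\Psi}_0$ is exactly $[\Psi_0, \dot{g}^{\ast}_0]$.

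For the left-hand side I would expand $\dot{\Psi}_0$ by the Leibniz rule over the three $t$-dependencies of $\Psi_t$ (the complex structure inside $\overline{\partial}_{T_{X, J_t}}$, the pair $(g_t, \Omega_t)$ inside $\nabla_{g_t}^{\ast_{\Omega_t}}$, and the argument $\dot{g}^{\ast}_t$):
\[ \dot{\Psi}_0 = \overline{\partial}_{T_{X, J}} \nabla_g^{\ast_{\Omega}} \left. \frac{d}{dt} \right|_{t = 0} \dot{g}^{\ast}_t + R_1 + R_2 , \]
with $R_1 \assign \big( \left. \frac{d}{dt} \right|_{t = 0} \overline{\partial}_{T_{X, J_t}} \big) \beta$ and $R_2 \assign \overline{\partial}_{T_{X, J}} \big[ (D_{g, \Omega} \nabla_{\bullet}^{\ast_{\bullet}}) (\dot{g}_0, \dot{\Omega}_0) \big] \dot{g}^{\ast}_0$. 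Here the hypothesis $(\dot{g}_0, \dot{\Omega}_0) \in \mathbbm{F}^J_{g, \Omega}$, namely $\nabla_g^{\ast_{\Omega}} \dot{g}^{\ast}_0 + \nabla_g \dot{\Omega}^{\ast}_0 = 0$, enters twice: it annihilates the second summand of (\ref{var-adjDer}), so that $R_2 = \frac{1}{2} \overline{\partial}_{T_{X, J}} M_g (\dot{g}_0, \dot{g}^{\ast}_0)$ in the notation of (\ref{var-adjDer}), and it forces $\beta = - \nabla_g \dot{\Omega}^{\ast}_0$ to be a gradient field. Consequently $\nabla_g \beta$ is $g$-symmetric, its $J$-anti-linear part is $\overline{\partial}_{T_{X, J}} \beta = \Psi_0$, its $J$-linear part is $\partial^g_{T_{X, J}} \beta$, and $\overline{\partial}_{T_{X, J}} \beta - \partial^g_{T_{X, J}} \beta = J (\nabla_g \beta) J$.

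Subtracting the two expressions for $\dot{\Psi}_0$ and isolating $g$-antisymmetric parts, the claim (\ref{first-kur-sm}) reduces to the single identity
\[ \big( R_1 - (R_1)_g^T \big) + \big( R_2 - (R_2)_g^T \big) = [J (\nabla_g \beta) J, \dot{g}^{\ast}_0] , \]
which converts the transpose-commutator $[\Psi_0, \dot{g}^{\ast}_0] = [\overline{\partial}_{T_{X, J}} \beta, \dot{g}^{\ast}_0]$ into the asserted $[\partial^g_{T_{X, J}} \nabla_g^{\ast_{\Omega}} \dot{g}^{\ast}_0, \dot{g}^{\ast}_0]$. Verifying this identity is the main obstacle. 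I would compute $R_1$ from $\overline{\partial}_{T_{X, J_t}} \beta = \nabla^{0, 1}_{g_t, J_t} \beta$ via the Besse variation of the Levi-Civita connection together with the derived Kähler relation $\dot{\nabla}_g (\eta, \xi) + J \dot{\nabla}_g (J \eta, \xi) + J \nabla_g \dot{J}_0 (\xi, \eta) = 0$, and $R_2$ by pushing $\frac{1}{2} M_g (\dot{g}_0, \dot{g}^{\ast}_0)$ through $\overline{\partial}_{T_{X, J}}$. The Kähler identity $\nabla_g J = 0$ and the gradient property of $\beta$ should cancel every $g$-symmetric contribution and leave precisely the commutator on the right; this cancellation, not any further conceptual ingredient, is where the real work lies.
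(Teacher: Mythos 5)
Your strategy coincides with the paper's: differentiate the symmetry (\ref{basic-kuranishSym}) at $t=0$, split $\dot{\Psi}_0$ by Leibniz into the main term plus the two operator-variation terms $R_1, R_2$, use the transpose-variation identity $\frac{d}{dt}(B_t)^T_{g_t}=(\dot{B}_t)^T_{g_t}+[(B_t)^T_{g_t},\dot{g}^{\ast}_t]$, and exploit $(\dot{g}_0,\dot{\Omega}_0)\in\mathbbm{F}^J_{g,\Omega}$ both to kill the second summand of (\ref{var-adjDer}) and to make $\beta=\nabla_g^{\ast_{\Omega}}\dot{g}^{\ast}_0=-\nabla_g\dot{\Omega}^{\ast}_0$ a gradient. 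Your reduction of (\ref{first-kur-sm}) to the identity $(R_1-(R_1)^T_g)+(R_2-(R_2)^T_g)=[J(\nabla_g\beta)J,\dot{g}^{\ast}_0]$ is correct. The problem is that you stop exactly at the step that carries the content of the lemma: you assert that the symmetric contributions ``should cancel'' without computing $R_1$, and you propose to obtain $R_1$ by a from-scratch expansion through the Besse formula, which is itself a calculation of the same order as everything else in the argument.

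The paper closes this gap not by that expansion but by importing the variation formula for $\overline{\partial}_{T_{X,J_t}}$ acting on a fixed vector field $\xi$ from lemma 1 of \cite{Pal3},
\[
2\,\frac{d}{dt}\left(\overline{\partial}_{T_{X,J_t}}\xi\right)=\xi\neg\nabla_{g_t}\dot{g}^{\ast}_t-\left[\partial^{g_t}_{T_{X,J_t}}\xi,\dot{g}^{\ast}_t\right]+\left[\overline{\partial}_{T_{X,J_t}}\xi,\dot{g}^{\ast}_t\right],
\]
after which your identity follows in two lines: the terms $\nabla_{g,\beta}\dot{g}^{\ast}_0$ and $R_2=\overline{\partial}_{T_{X,J}}\left(\frac{1}{4}\nabla_g|\dot{g}_0|^2_g\right)$ are $g$-symmetric and drop out of the antisymmetric part; since $\beta$ is a gradient, $\partial^g_{T_{X,J}}\beta$ and $\overline{\partial}_{T_{X,J}}\beta$ are $g$-symmetric, so their commutators with the symmetric $\dot{g}^{\ast}_0$ are $g$-antisymmetric, giving $R_1-(R_1)^T_g=[\overline{\partial}_{T_{X,J}}\beta-\partial^g_{T_{X,J}}\beta,\dot{g}^{\ast}_0]$, which is exactly your right-hand side. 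So your outline is sound and the missing identity is true, but as written the proposal defers the decisive computation; either cite the \cite{Pal3} variation formula or actually carry out the expansion you sketch.
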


\begin{proof}
  Let $A$ be a smoth $g$-symmetric endomorphism section of $T_X$.
  Differentiating in the variables $\left( g, \Omega \right)$ the trivial
  identity $\nabla_{g_{}}^{\ast_{\Omega}} A = g^{- 1} \nabla_g^{\ast_{\Omega}}
  \left( g A \right)$, we obtain
  \begin{eqnarray*}
    \left[ \left( D_{g, \Omega} \nabla_{\bullet}^{\ast_{\bullet}} \right)
    \left( v, V \right)_{_{_{_{}}}} \right] A & = & - v^{\ast}_g
    \nabla_{g_{}}^{\ast_{\Omega}} A \noplus + g^{- 1} \left[ \left( D_{g,
    \Omega} \nabla_{\bullet}^{\ast_{\bullet}} \right) \left( v, V
    \right)_{_{_{_{}}}} \right] \left( g A \right) \\
    &  & \\    
    &+& \nabla_g^{\ast_{\Omega}}
    \left( v^{\ast}_g A \right) .
  \end{eqnarray*}
  We observe now the identities
  \begin{eqnarray*}
    M_g (v, v) & = & 2 g \tmop{Tr}_g  \left( \nabla_g v^{\ast}_g \cdot
    v^{\ast}_g \right) + \frac{1}{2} d |v|^2_g \\
    &  & \\
    & = & 2 v \nabla_g^{\ast_{\Omega}} v^{\ast}_g - 2 g
    \nabla_g^{\ast_{\Omega}} \left( v^{\ast}_g \right)^2 + \frac{1}{2} d
    |v|^2_g .
  \end{eqnarray*}
  Then using the variation formula (\ref{var-adjDer}) we infer the fundamental
  identity
  \begin{eqnarray}
    2 \left[ \left( D_{g, \Omega} \nabla_{\bullet}^{\ast_{\bullet}} \right)
    \left( v, V \right)_{_{_{_{}}}} \right] v^{\ast}_g & = & \frac{1}{2}
    \nabla_g |v|^2_g - 2 v^{\ast}_g \cdot \left( \nabla_g^{\ast_{\Omega}}
    v^{\ast}_g + \nabla_g V^{\ast}_{\Omega} \right) . \label{super-var-Div} 
  \end{eqnarray}
  The variation formula for the $\overline{\partial}_{T_{X, J_t}}$-operator
  acting on vector fields in lemma 1 of \cite{Pal3} writes as
  \begin{eqnarray*}
    2 \frac{d}{d t}  \left( \overline{\partial}_{T_{X, J_t}} \xi \right) & = &
    \xi \neg \nabla_{g_t}  \dot{g}^{\ast}_t - \left[ \partial^{g_t}_{T_{X,
    J_t}} \xi, \dot{g}^{\ast}_t \right] + \left[ \overline{\partial}_{T_{X,
    J_t}} \xi, \dot{g}^{\ast}_t \right] .
  \end{eqnarray*}
  Using this, the variation formula (\ref{super-var-Div}) and the assumption
  on the initial speed of the curve $\left( g_t, \Omega_t \right)$, we infer
  \begin{eqnarray*}
    2 \frac{d}{d t} _{\mid_{t = 0}} \left( \overline{\partial}_{T_{X, J_t}}
    \nabla_{g_t}^{\ast_{\Omega_t}}  \dot{g}^{\ast}_{t_{_{}}} \right) & = &
    \nabla_g^{\ast_{\Omega}}  \dot{g}^{\ast}_0 \neg \nabla_g  \dot{g}^{\ast}_0
    \\
    &  & \\
    & - & \left[ \partial^g_{T_{X, J}} \nabla_g^{\ast_{\Omega}} 
    \dot{g}^{\ast}_0, \dot{g}^{\ast}_0 \right] + \left[
    \overline{\partial}_{T_{X, J}} \nabla_g^{\ast_{\Omega}} 
    \dot{g}^{\ast}_{0_{_{}}}, \dot{g}^{\ast}_0 \right]\\
    &  & \\
    & + & \frac{1}{2}  \overline{\partial}_{T_{X, J}} \nabla_g | \dot{g}_0
    |^2_g \noplus + 2 \overline{\partial}_{T_{X, J}} \nabla_g^{\ast_{\Omega}} 
    \frac{d}{d t} _{\mid_{t = 0}}  \dot{g}^{\ast}_t .
  \end{eqnarray*}
  Using this equality, the elementary identity
  \begin{eqnarray*}
    \frac{d}{d t} A^T_{g_t} & = & [A_{g_t}^T, \dot{g}^{\ast}_t],
  \end{eqnarray*}
  for arbitrary endomorphism section $A$ of $T_X$ and time deriving the
  identity (\ref{basic-kuranishSym}), we obtain the required conclusion.
  (Notice that the endomorphism section $\partial^g_{T_{X, J}}
  \nabla_g^{\ast_{\Omega}}  \dot{g}^{\ast}_0$ is $g$-symmetric thanks to the
  assumption $\nabla_g^{\ast_{\Omega}}  \dot{g}^{\ast}_0 = - \nabla_g 
  \dot{\Omega}^{\ast}_0$). 
\end{proof}

\begin{corollary}
  \label{fund-cx-def-sm}Let $\left( J_t \right)_t \subset
  \mathcal{J}_{\omega}$ be a smooth curve such that $\dot{J}_0 \in
  \mathcal{H}_{g, \Omega}^{0, 1} \left( T_{X, J} \right)$ then
  \begin{eqnarray*}
    \nabla_g^{\ast_{\Omega}} \left( \dot{J}_0 \neg \nabla^{1, 0}_{g, J} 
    \dot{J}_0  \right) & = & \left[ \nabla_g^{\ast_{\Omega}} \left( \dot{J}_0
    \neg \nabla^{1, 0}_{g, J}  \dot{J}_0  \right) \right]_g^T .
  \end{eqnarray*}
\end{corollary}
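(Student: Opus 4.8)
The plan is to realise $\dot{J}_0=:A$ as the initial velocity of a lift $(g_t,\Omega_t)\subset\mathcal{S}_\omega$ of the curve $(J_t)$, to feed the associated second variation into the previous lemma (\ref{first-kur-sm}), and then to convert the $\overline\partial\,\overline\partial^\ast$-half of the weighted Hodge--Witten Laplacian (which the lemma controls) into the $\overline\partial^\ast\overline\partial$-half (which the statement concerns) by means of the Bochner identity (\ref{dec-Lich2}).

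First I would exploit harmonicity. Since $A\in\mathcal{H}^{0,1}_{g,\Omega}(T_{X,J})$ it is $\overline\partial_{T_{X,J}}$-closed and $\overline\partial^{\ast_{g,\Omega}}_{T_{X,J}}$-coclosed, so by the bidegree identity $\overline\partial^{\ast_{g,\Omega}}_{T_{X,J}}=\nabla_g^{\ast_\Omega}$ we get $\nabla_g^{\ast_\Omega}\dot{J}_0=0$; as $J$ is $g$-parallel the same holds for $\dot{g}_0^\ast=-J\dot{J}_0$. Because $\dot{J}_0$ is purely harmonic, the corresponding tangent vector of $\mathcal{S}_\omega$ is $\eta(0,\dot{g}_0^\ast)$, whose volume component vanishes, so $\dot\Omega_0=0$ and $(\dot{g}_0,\dot\Omega_0)\in\mathbbm{F}^J_{g,\Omega}$. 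The previous lemma then applies, and since $\nabla_g^{\ast_\Omega}\dot{g}_0^\ast=0$ its commutator term drops, so $\overline\partial_{T_{X,J}}\nabla_g^{\ast_\Omega}W$ is $g$-symmetric, where $W:=\frac{d}{dt}\big|_{0}\dot{g}_t^\ast$ is the second variation.

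Next I would decompose $\Delta^{\Omega,-J}_{T_{X,g}}W=\overline\partial_{T_{X,J}}\overline\partial^{\ast_{g,\Omega}}_{T_{X,J}}W+\tfrac12\,\overline\partial^{\ast_{g,\Omega}}_{T_{X,J}}\overline\partial_{T_{X,J}}W$: the first summand is exactly the symmetric $\overline\partial_{T_{X,J}}\nabla_g^{\ast_\Omega}W$. The tensor $W=\ddot{g}_0^\ast-(\dot{g}_0^\ast)^2$ is a difference of $g$-symmetric endomorphisms, hence $g$-symmetric and $J$-anti-linear, and $\mathcal{L}^{\Omega}_g$ preserves such endomorphisms; writing $\tmop{Ric}^\ast(g)=\mathbbm{I}-\partial^g_{T_{X,J}}\nabla_g f$ (the endomorphism form of $\omega=\tmop{Ric}_J(\Omega)$), the non-Lichnerowicz terms of (\ref{dec-Lich2}) regroup as the anticommutator $\{\partial^g_{T_{X,J}}\nabla_g f,\,W\}$ together with $-J\,\nabla_{g,J\nabla_g f}W$, both $g$-symmetric (the first as an anticommutator of symmetric endomorphisms, the second since $\nabla_g W$ stays symmetric and $J$-anti-linear). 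Thus $\Delta^{\Omega,-J}_{T_{X,g}}W$ is $g$-symmetric, and subtracting the already symmetric first summand shows $\overline\partial^{\ast_{g,\Omega}}_{T_{X,J}}\overline\partial_{T_{X,J}}W$ is $g$-symmetric. Finally (\ref{sec-ord-Defm}) gives $\overline\partial_{T_{X,J}}W=\dot{g}_0^\ast\neg\nabla^{1,0}_{g,J}\dot{g}_0^\ast$, so $\nabla_g^{\ast_\Omega}(\dot{g}_0^\ast\neg\nabla^{1,0}_{g,J}\dot{g}_0^\ast)$ is $g$-symmetric; transporting this through $\dot{g}_0^\ast=-J\dot{J}_0$ with $J$ parallel (the twist by $J$ preserving $g$-symmetry of $J$-anti-linear endomorphisms) yields the claim for $\dot{J}_0$.

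The step I expect to require the most care is the $g$-symmetry of $\Delta^{\Omega,-J}_{T_{X,g}}W$: the commutator $[\tmop{Ric}^\ast(g),W]$ appearing in (\ref{dec-Lich2}) is $g$-anti-symmetric, and it is only after substituting the polarization identity $\tmop{Ric}^\ast(g)=\mathbbm{I}-\partial^g_{T_{X,J}}\nabla_g f$ that it combines with the Hessian term into a symmetric anticommutator; keeping precise track of the $(1,0)$/$(0,1)$ projections, of the contraction $\neg$, and of the $J$-twist in the last step is the only genuinely computational point.
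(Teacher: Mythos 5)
Your argument is correct and is essentially the paper's own proof: both rest on applying (\ref{first-kur-sm}) with the commutator term killed by $\nabla_g^{\ast_{\Omega}}\dot{g}^{\ast}_0=0$, splitting $\Delta^{\Omega,-J}_{T_{X,g}}\frac{d}{dt}_{\mid_{t=0}}\dot{g}^{\ast}_t$ into its two Hodge halves, and identifying the $\overline{\partial}^{\ast}\overline{\partial}$-half with $\nabla_g^{\ast_{\Omega}}\left(\dot{g}^{\ast}_0\neg\nabla^{1,0}_{g,J}\dot{g}^{\ast}_0\right)$ via (\ref{sec-ord-Defm}). The only substantive difference is that you re-derive the $g$-symmetry of the Laplacian term from the Bochner identity (\ref{dec-Lich2}) (correctly, via the anticommutator regrouping), whereas the paper simply cites this fact from \cite{Pal2}.
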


\begin{proof}
  The identity (\ref{sec-ord-Defm}) implies
  \begin{eqnarray*}
    \overline{\partial}_{T_{X, J}} \nabla_g^{\ast_{\Omega}}  \frac{d}{d t}
    _{\mid_{t = 0}}  \dot{g}^{\ast}_t & = & \Delta^{\Omega, - J}_{T_{X, g}} 
    \frac{d}{d t} _{\mid_{t = 0}}  \dot{g}^{\ast}_t - \nabla_g^{\ast_{\Omega}}
    \left( \dot{g}^{\ast}_0 \neg \nabla^{1, 0}_{g, J}  \dot{g}^{\ast}_0 
    \right) .
  \end{eqnarray*}
  Pluging this in the equality (\ref{first-kur-sm}) and using the fact that
  the Laplacian term is $g$-symmetric (see \cite{Pal2}), we infer the required
  conclusion.
\end{proof}

\begin{lemma}
  \label{sec-harm-var}Let $\left( X, J, g, \omega \right)$ be a Fano manifold
  such that $\omega = \tmop{Ric}_{_J} \left( \Omega \right)$, with $\Omega \in
  \mathcal{V}_1$ and let $\left( J_t \right)_t \subset \mathcal{J}_{\omega}$
  be a smooth curve such that $J_0 = J$ and $\nabla_g^{\ast_{\Omega}} 
  \dot{J}_0 = 0$. Then there exists unique $\left( \psi, A_1 \right) \in
  \Lambda^{\Omega, \bot}_{g, J} \oplus \mathcal{H}_{g, \Omega}^{0, 1} (T_{X,
  J})$ such that
  \begin{eqnarray*}
    \frac{d}{d t} _{\mid_{t = 0}}  \dot{g}^{\ast}_t + \nabla_g^{\ast_{\Omega}}
    \left( \Delta^{\Omega, - J}_{T_{X, g}} \right)^{- 1} \left( \dot{J}_0 \neg
    \nabla^{1, 0}_{g, J}  \dot{J}_0  \right) & = & \overline{\partial}_{T_{X,
    J}} \nabla_{g, J}  \overline{\psi} + A_1 .
  \end{eqnarray*}
\end{lemma}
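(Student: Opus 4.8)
The plan is to show that the left-hand side, written $B := W + R$ with $W := \frac{d}{dt}_{\mid_{t=0}}\dot{g}^{\ast}_t$, $C := \dot{J}_0 \neg \nabla^{1,0}_{g,J}\dot{J}_0$, $\Phi := (\Delta^{\Omega,-J}_{T_{X,g}})^{-1} C$ and $R := \nabla_g^{\ast_{\Omega}}\Phi$, is a $g$-symmetric, $J$-anti-linear, $\overline{\partial}_{T_{X,J}}$-closed $(0,1)$-form, i.e. that $gB$ lies in $\mathbbm{D}^J_{g,[0]}$; the decomposition and its uniqueness are then read off from the isomorphism $\eta$ of \cite{Pal2}. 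First I would note that the hypothesis $\nabla_g^{\ast_{\Omega}}\dot{J}_0 = 0$ is exactly $\overline{\partial}^{\ast_{g,\Omega}}_{T_{X,J}}\dot{J}_0 = 0$, since these two operators agree on $(0,1)$-forms; combined with the integrability identity $\overline{\partial}_{T_{X,J}}\dot{J}_0 = 0$ this gives $\Delta^{\Omega,-J}_{T_{X,g}}\dot{J}_0 = 0$, so $\dot{J}_0 \in \mathcal{H}_{g,\Omega}^{0,1}(T_{X,J})$ and Corollary \ref{fund-cx-def-sm} applies: $\nabla_g^{\ast_{\Omega}} C$ is $g$-symmetric.

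Next I would check that $R$ is well defined. By (\ref{sec-ord-Defm}) the $(0,2)$-form $C$ is $\overline{\partial}_{T_{X,J}}$-exact (up to sign it is $\overline{\partial}_{T_{X,J}}W$), hence $L^2_{\Omega}$-orthogonal to $\ker \Delta^{\Omega,-J}_{T_{X,g}}$ on $(0,2)$-forms, so the Green operator $(\Delta^{\Omega,-J}_{T_{X,g}})^{-1}$ may be applied to it. The normalisation $\frac{1}{q},\frac{1}{q+1}$ of the Hodge--Witten Laplacian is precisely the one making $\Delta^{\Omega,-J}_{T_{X,g}}$ commute with $\overline{\partial}_{T_{X,J}}$ and with $\nabla_g^{\ast_{\Omega}}$ (equivalently with $\overline{\partial}^{\ast_{g,\Omega}}_{T_{X,J}}$); since $C$ is $\overline{\partial}_{T_{X,J}}$-closed this forces $\overline{\partial}_{T_{X,J}}\Phi = 0$, so $\Delta^{\Omega,-J}_{T_{X,g}}\Phi$ collapses to its $\overline{\partial}_{T_{X,J}}\overline{\partial}^{\ast_{g,\Omega}}_{T_{X,J}}$-term.

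The core of the argument is that $B$ lies in $\mathbbm{D}^J_{g,[0]}$. It is $J$-anti-linear because $W$ and $R$ are both $(0,1)$-forms. It is $\overline{\partial}_{T_{X,J}}$-closed: applying $\overline{\partial}_{T_{X,J}}$ to $R$ and using $\overline{\partial}_{T_{X,J}}\Phi = 0$ in $\Delta^{\Omega,-J}_{T_{X,g}}\Phi = C$ expresses $\overline{\partial}_{T_{X,J}}R$ as an explicit multiple of $C$, and the sign and normalisation built into $\Delta^{\Omega,-J}_{T_{X,g}}$ are exactly those for which this cancels $\overline{\partial}_{T_{X,J}}W$, computed from (\ref{sec-ord-Defm}) together with $\dot{g}^{\ast}_0 = -J\dot{J}_0$ and the K\"ahler identities. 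It is $g$-symmetric: $W = \ddot{g}^{\ast}_0 - (\dot{g}^{\ast}_0)^2$ is a difference of $g$-symmetric endomorphisms, while for $R$ the commutation of $\Delta^{\Omega,-J}_{T_{X,g}}$ with $\nabla_g^{\ast_{\Omega}}$ gives $\Delta^{\Omega,-J}_{T_{X,g}} R = \nabla_g^{\ast_{\Omega}} C$, which is $g$-symmetric by Corollary \ref{fund-cx-def-sm}; since $\Delta^{\Omega,-J}_{T_{X,g}}$ respects the splitting into $g$-symmetric and $g$-anti-symmetric parts and its kernel $\mathcal{H}_{g,\Omega}^{0,1}(T_{X,J})$ consists of $g$-symmetric endomorphisms, the anti-symmetric part of $R$ is harmonic, hence $g$-symmetric, hence zero. (The symmetry propagation is also consistent with (\ref{basic-kuranishSym}).)

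Finally I would invoke Hodge theory and the isomorphism $\eta$. Being $\overline{\partial}_{T_{X,J}}$-closed, $B$ has vanishing coexact part, so $B = \overline{\partial}_{T_{X,J}}\alpha + A_1$ with $A_1 \in \mathcal{H}_{g,\Omega}^{0,1}(T_{X,J})$ its harmonic projection. The first-component map $(\psi, A) \mapsto \overline{\partial}_{T_{X,J}}\nabla_{g,J}\overline{\psi} + A$ underlying $\eta$ is a linear isomorphism from $\Lambda^{\Omega,\bot}_{g,J} \oplus \mathcal{H}_{g,\Omega}^{0,1}(T_{X,J})$ onto the space of $g$-symmetric, $J$-anti-linear, $\overline{\partial}_{T_{X,J}}$-closed $(0,1)$-forms: it is injective, since a vanishing sum forces $A = 0$ by harmonic projection and then $\nabla_{g,J}\overline{\psi} \in H^0(X, T^{1,0}_{X,J})$, whence $\psi \in \Lambda^{\Omega}_{g,J} \cap \Lambda^{\Omega,\bot}_{g,J} = \{0\}$ by the Futaki identification of $\Lambda^{\Omega}_{g,J}$ with the holomorphic vector fields (\cite{Fut}); and it is surjective by the Hodge decomposition just used. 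Hence there is a unique $(\psi, A_1)$ with $B = \overline{\partial}_{T_{X,J}}\nabla_{g,J}\overline{\psi} + A_1$, which is the claim. The main obstacle is the core computation of the third paragraph: tracking the exact constants so that the $R$-term cancels the coexact component of $W$, i.e. verifying $\overline{\partial}_{T_{X,J}}B = 0$ with the Hodge--Witten normalisation, and confirming that the $g$-symmetry survives the Green operator.
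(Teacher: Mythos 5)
Your proposal follows essentially the same route as the paper: show the expression is $\overline{\partial}_{T_{X,J}}$-closed via (\ref{sec-ord-Defm}) and the Green operator, show it is $g$-symmetric by commuting $\left(\Delta^{\Omega,-J}_{T_{X,g}}\right)^{-1}$ with $\nabla_g^{\ast_{\Omega}}$ and invoking Corollary \ref{fund-cx-def-sm}, then apply the decomposition of $g$-symmetric $\overline{\partial}_{T_{X,J}}$-closed endomorphisms underlying the isomorphism $\eta$ (corollary 3 of \cite{Pal2}). The extra detail you supply (harmonicity of $\dot{J}_0$, the injectivity argument, the symmetry-propagation through the Green operator) is consistent with the facts the paper cites from \cite{Pal2}.
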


\begin{proof}
  The identity (\ref{sec-ord-Defm}) implies
  \begin{eqnarray*}
    \overline{\partial}_{T_{X, J}} \left[  \frac{d}{d t} _{\mid_{t = 0}} 
    \dot{g}^{\ast}_t + \nabla_g^{\ast_{\Omega}} \left( \Delta^{\Omega, -
    J}_{T_{X, g}} \right)^{- 1} \left( \dot{J}_0 \neg \nabla^{1, 0}_{g, J} 
    \dot{J}_0  \right) \right] & = & 0 .
  \end{eqnarray*}
  Moreover the endomorphism
  \begin{eqnarray*}
    &  & \frac{d}{d t} _{\mid_{t = 0}}  \dot{g}^{\ast}_t +
    \nabla_g^{\ast_{\Omega}} \left( \Delta^{\Omega, - J}_{T_{X, g}} \right)^{-
    1} \left( \dot{J}_0 \neg \nabla^{1, 0}_{g, J}  \dot{J}_0  \right)\\
    &  & \\
    & = &  \frac{d}{d t} _{\mid_{t = 0}}  \dot{g}^{\ast}_t + \left(
    \Delta^{\Omega, - J}_{T_{X, g}} \right)^{- 1} \nabla_g^{\ast_{\Omega}}
    \left( \dot{J}_0 \neg \nabla^{1, 0}_{g, J}  \dot{J}_0  \right),
  \end{eqnarray*}
  is $g$-symmetric thanks to corollary \ref{fund-cx-def-sm}, lemma 13 in \cite{Pal2} and identity (14.7) in \cite{Pal2}. By corollary 3 in \cite{Pal2}, we infer
  the required conclusion. Notice that $\left( \psi, A_1 \right)$ is uniquely
  determined by $\dot{J}_0$ and $\ddot{J}_0$. 
\end{proof}

\section{Proof of theorem \ref{main-teo}}

For any smooth family $\left( g_t, \Omega_t \right)_t \subset
\mathcal{S}_{\omega}$, with $\left( g_0, \Omega_0 \right) = \left( g, \Omega
\right)$, we consider the smooth curve $t \mapsto \gamma_t \assign
\underline{H}_{g_t, \Omega_t} \Omega_t / \Omega \in C_{\Omega}^{\infty} \left(
X, \mathbbm{R} \right)_0$. Then $\left( g_t, \Omega_t \right)_t \equiv \left(
J_t, \omega \right)_t$ is a family of K\"ahler-Ricci solitons if and only if
$\gamma_t \equiv 0$. We assume this identity and we notice that $0 =
\dot{\gamma}_0 = D_{g, \Omega}  \underline{H}( \dot{g}_0,
\dot{\Omega}_0)$. We write
\begin{eqnarray*}
  \dot{g}^{\ast}_0  =  - J \dot{J}_0 = \overline{\partial}_{T_{X, J}}
  \nabla_{g, J} \overline{\theta} + \; 2 A,
\end{eqnarray*}
with $\left( \theta, A \right) \in \Lambda^{\Omega, \bot}_{g, J} \oplus
\mathcal{H}_{g, \Omega}^{0, 1} (T_{X, J})$. The properties of the first
variation of $\underline{H}$ imply $\theta \in \mathbbm{O}^J_{g, \Omega}$.
According to the isomorphism $\tau$ in remark \ref{tau-rmk}, we pick the unique
$u \in \Lambda^{\Omega, \bot}_{g, \mathbbm{R}}$ such that $\theta - i u \in
\Lambda^{\Omega}_{g, J}$ and we consider the one parameter subgroup of
$\omega$-symplectomorphisms $\left( \Psi_t \right)_t$, $\Psi_0 = \tmop{id}_X$,
given by $2 \dot{\Psi}_t = - \left( \omega^{- 1} d u \right) \circ \Psi_t$.
Then $\left( \Psi^{\ast}_t J_t, \omega \right)_t$ is still a family of
K\"ahler-Ricci solitons and
\begin{eqnarray*}
  \frac{d}{d t} _{\mid_{t = 0}} \Psi_t^{\ast} J_t & = & \dot{J}_0 -
  \frac{1}{2} L_{\omega^{- 1} d u} J\\
  &  & \\
  & = & J \overline{\partial}_{T_{X, J}} \nabla_{g, J}  \overline{\left(
  \theta - i u \right)} + 2 J A\\
  &  & \\
  & = & 2 J A .
\end{eqnarray*}
Thus we can assume, without loss of generality in the statement of the theorem
\ref{main-teo}, that the family of K\"ahler-Ricci solitons $\left( J_t, \omega
\right)_t$ satisfies $\dot{J}_0 \in \mathcal{H}_{g, \Omega}^{0, 1} (T_{X,
J})$. Using this assumption, we explicit the second variation of the map
$(g,\Omega)\longmapsto\underline{H}_{g,\Omega}$. The fact that $\dot{g}_0^{\ast} = 2 A$, implies
$\dot{\Omega}_0 = 0$, thanks to the equations defining the space
$\mathbbm{T}^J_{g, \Omega}$. Thus
\begin{eqnarray*}
  2 \frac{d^2}{d t^2} _{\mid_{t = 0}}  \underline{H}_{g_t, \Omega_t} & = & 2
  \nabla_G D \underline{H} (g, \Omega)  ( \dot{g}_0, 0 ; \dot{g}_0, 0) + 2
  D_{g, \Omega}  \underline{H}  \left( \xi, \Xi \right),
\end{eqnarray*}
with
\begin{eqnarray*}
  \xi^{\ast}_g & : = & \frac{d}{d t} _{\mid_{t = 0}}  \dot{g}^{\ast}_t,\\
  &  & \\
  \Xi^{\ast}_{\Omega} & : = & \frac{d}{d t} _{\mid_{t = 0}} 
  \dot{\Omega}^{\ast}_t + \frac{1}{4}  \left| \dot{g}_0 \right|^2_g -
  \frac{1}{4} G_{g, \Omega} ( \dot{g}_0, 0 ; \dot{g}_0, 0) .
\end{eqnarray*}
Using the fact that $\left( g, \Omega \right)$ is a soliton and the first and
second variation formulas for Perelman's functions $H$ (in \cite{Pal2} and corollary
\ref{corol-sec-varH}), and $\mathcal{W}$ (in \cite{Pal2}), we infer
\begin{eqnarray*}
  2 \frac{d^2}{d t^2} _{\mid_{t = 0}}  \underline{H}_{g_t, \Omega_t} & = &
  \nabla_G D \left( 2 H -\mathcal{W} \right) (g, \Omega)  ( \dot{g}_0, 0 ;
  \dot{g}_0, 0) + 2 D_{g, \Omega} H \left( \xi, \Xi \right) \\
  &  & \\
  & = & - 2 \left\langle \mathcal{L}^{\Omega}_g A, A \right\rangle_g -
  (\Delta^{\Omega}_g - 2\mathbbm{I}) \left| A \right|_g^2 - 2 \int_X \left| A
  \right|_g^2 \Omega  \\
  &  & \\
  & + & 2 \int_X \left| A \right|_g^2 F \,\Omega+2 (\Delta^{\Omega}_g -\mathbbm{I}) \Xi^{\ast}_{\Omega} -
  \tmop{div}^{\Omega} \nabla_g^{\ast_{\Omega}} \xi_g^{\ast}\\
  &  & \\
  & = & 2 \int_X \left| A \right|_g^2 F \Omega - 2 \left\langle
  \mathcal{L}^{\Omega}_g A, A \right\rangle_g + \Delta^{\Omega}_g \left| A
  \right|^2_g  \\
  &  & \\
  & + & 2 (\Delta^{\Omega}_g -\mathbbm{I})  \frac{d}{d t} _{\mid_{t =
  0}}  \dot{\Omega}^{\ast}_t-\tmop{div}^{\Omega} \nabla_g^{\ast_{\Omega}}  \frac{d}{d t} _{\mid_{t
  = 0}}  \dot{g}^{\ast}_t .
\end{eqnarray*}
Using lemma \ref{sec-harm-var} and the weighted complex Bochner formula (13.9)
in \cite{Pal2}, we obtain
\begin{equation}
  \label{div-sec-var-met} \nabla_g^{\ast_{\Omega}}  \frac{d}{d t} _{\mid_{t =
  0}}  \dot{g}^{\ast}_t = \overline{\partial}^{\ast_{g, \Omega}}_{T_{X, J}}
  \overline{\partial}_{T_{X, J}} \nabla_{g, J}  \overline{\psi} = \frac{1}{2}
  \nabla_{g, J} \overline{ (\Delta^{\Omega}_{g, J} - 2\mathbbm{I}) \psi},
\end{equation}
and thus
\begin{eqnarray*}
  - \tmop{div}^{\Omega} \nabla_g^{\ast_{\Omega}} \frac{d}{d t} _{\mid_{t = 0}}
  \dot{g}^{\ast}_t  & = & \frac{1}{2} \Delta^{\Omega}_g R_{\psi} +
  \frac{1}{2} B^{\Omega}_{g, J} I_{\psi},\\
  &  & \\
  R_{\psi} & \assign & \tmop{Re} \left[ (\Delta^{\Omega}_{g, J} -
  2\mathbbm{I}) \psi \right],\\
  &  & \\
  I_{\psi} & \assign & \tmop{Im} \left[ (\Delta^{\Omega}_{g, J} -
  2\mathbbm{I}) \psi \right] .
\end{eqnarray*}
(Here we use the notation $z = \tmop{Re} z + i \tmop{Im} z$, for any $z \in
\mathbbm{C}$). Differentiating the tangential identity $2 d d_{J_t}^c 
\dot{\Omega}^{\ast}_t = - d \left[ \nabla_{g_t}^{\ast_{\Omega_t}}
\dot{g}_t^{\ast} \neg \omega \right]$, we obtain,
\begin{eqnarray*}
  2 d d_J^c  \frac{d}{d t} _{\mid_{t = 0}}  \dot{\Omega}^{\ast}_t  & = & - d
  \left[ \frac{d}{d t} _{\mid_{t = 0}} \left( \nabla_{g_t}^{\ast_{\Omega_t}}
  \dot{g}_t^{\ast} \right) \neg \omega \right] .
\end{eqnarray*}
Using the variation formula (\ref{super-var-Div}), and the identity
(\ref{div-sec-var-met}) we obtain
\begin{eqnarray*}
  \frac{d}{d t} _{\mid_{t = 0}} \left( \nabla_{g_t}^{\ast_{\Omega_t}}
  \dot{g}_t^{\ast} \right) & = & \frac{1}{4} \nabla_g | \dot{g}_0 |_g^2 +
  \nabla_g^{\ast_{\Omega}}  \frac{d}{d t} _{\mid_{t = 0}}  \dot{g}^{\ast}_t\\
  &  & \\
  & = & \nabla_g |A|_g^2 + \frac{1}{2} \nabla_{g, J} \overline{
  (\Delta^{\Omega}_{g, J} - 2\mathbbm{I}) \psi} .
\end{eqnarray*}
and thus
\begin{eqnarray*}
  \frac{d}{d t} _{\mid_{t = 0}}  \dot{\Omega}^{\ast}_t & = & - \frac{1}{2}
  R_{\psi} - |A|_g^2 + \int_X |A|_g^2 \Omega .
\end{eqnarray*}
We obtain in conclusion the variation formula
\begin{eqnarray*}
  2 \frac{d^2}{d t^2} _{\mid_{t = 0}}  \underline{H}_{g_t, \Omega_t} & = & 2
  \int_X \left| A \right|_g^2 F \Omega - 2 \left\langle \mathcal{L}^{\Omega}_g
  A, A \right\rangle_g\\
  &  & \\
  & - & (\Delta^{\Omega}_g - 2\mathbbm{I}) \left| A \right|_g^2 - 2 \int_X
  \left| A \right|_g^2 \Omega\\
  &  & \\
  & - & \frac{1}{2} (\Delta^{\Omega}_g - 2\mathbbm{I}) R_{\psi} + \frac{1}{2}
  B^{\Omega}_{g, J} I_{\psi}\\
  &  & \\
  & = & - 2 \left\langle J \nabla_g f \neg \nabla_g A, J A \right\rangle_g +
  4 \left\langle \nabla^2_g f, A^2 \right\rangle_g + 2 \int_X \left| A
  \right|_g^2 F \Omega\\
  &  & \\
  & - & (\Delta^{\Omega}_g - 2\mathbbm{I}) \left| A \right|_g^2 - 2 \int_X
  \left| A \right|_g^2 \Omega - \frac{1}{2} P^{\Omega}_{g, J} \tmop{Re} \psi,
\end{eqnarray*}
thanks to identity (\ref{stab-harm}) and a computation in the proof of lemma
25 in \cite{Pal2}. We denote respectively by $\pi_1$ and $\pi_2$ the projection to
the first and second factor of the decomposition (\ref{dec-P-op}). Then the
identity
\begin{eqnarray*}
  0 = \pi_2  \ddot{\gamma}_0 & = & \pi_2  \frac{d^2}{d t^2} _{\mid_{t = 0}} 
  \underline{H}_{g_t, \Omega_t},
\end{eqnarray*}
is equivalent to the identity
\begin{equation}
  \label{first-obstr}  \int_X u_1 \left[ 4 \left\langle \nabla^2_g f, A^2
  \right\rangle_g - 2 \left\langle J \nabla_g f \neg \nabla_g A, J A
  \right\rangle_g - (\Delta^{\Omega}_g - 2\mathbbm{I})_{_{_{_{_{}}}}} \left| A
  \right|_g^2 \right] \Omega = 0,
\end{equation}
for any $u = u_1 + i u_2 \in \Lambda^{\Omega}_{g, J}$, with $u_1$, $u_2$, real
valued. We observe now the equalities
\begin{eqnarray*}
  \int_X u_1  (\Delta^{\Omega}_g - 2\mathbbm{I})_{_{_{_{_{}}}}} \left| A
  \right|_g^2 \Omega & = & - \int_X B^{\Omega}_{g, J} u_2  \left| A
  \right|_g^2 \Omega\\
  &  & \\
  & = & \int_X u_2 B^{\Omega}_{g, J} \left| A \right|_g^2 \Omega\\
  &  & \\
  & = & \int_X u_2  \left( J \nabla_g f \right) . \left| A \right|_g^2
  \Omega\\
  &  & \\
  & = & 2 \int_X u_2  \left\langle J \nabla_g f \neg \nabla_g A, A
  \right\rangle_g \Omega .
\end{eqnarray*}
We conclude that the identity (\ref{first-obstr}) is equivalent to
\begin{eqnarray*}
  2 \int_X u_1  \left\langle \nabla^2_g f, A^2 \right\rangle_g \Omega & = &
  \int_X \left\langle J \nabla_g f \neg \nabla_g A, i \overline{u} \times_{_J}
  A_{_{_{_{}}}} \right\rangle_g \Omega,
\end{eqnarray*}
which shows the required conclusion.

\section{Appendix}
\subsection{Proof of the identities (\ref{div-Ev}) and (\ref{div-Tr})}

By
  definition of the $\Omega$-divergence operator and using the symmetry of $A$
  we infer
  \begin{eqnarray*}
    \tmop{div}^{\Omega} \left( A \cdot \xi \right) & = & g (\nabla_{g, e_k}
    \left( A \cdot \xi \right), e_k) - g \left( A \cdot \xi, \nabla_g f
    \right)\\
    &  & \\
    & = & g (\nabla_{g, e_k} A \cdot \xi + A \cdot \nabla_{g, e_k} \xi, e_k)
    - g \left( \xi, A \cdot \nabla_g f \right)\\
    &  & \\
    & = & g \left( \xi, \nabla_{g, e_k} A \cdot e_k - A \cdot \nabla_g f
    \right) \noplus + g \left( \nabla_{g, e_k} \xi, A e_k \right),
  \end{eqnarray*}
  and thus the identity (\ref{div-Ev}). We expand now the term
  \begin{eqnarray*}
    \tmop{div}^{\Omega} \tmop{Tr}_g \left( \nabla_g A \cdot A \right) & = &
    \tmop{div}^{\Omega}  \left( \nabla_{g, e_k} A \cdot A e_k \right)\\
    &  & \\
    & = & g \left( \nabla_{g, e_l} \left( \nabla_{g, e_k} A \cdot A e_k
    \right), e_l \right) - g \left( \nabla_{g, e_k} A \cdot A e_k, \nabla_g f
    \right)\\
    &  & \\
    & = & g \left( \nabla_{g, e_l} \nabla_{g, e_k} A \cdot A e_k + \nabla_{g,
    e_k} A \cdot \nabla_{g, e_l} A \cdot e_k, e_l \right)\\
    &  & \\
    & - & g \left( A e_k, \nabla_{g, e_k} A \cdot \nabla_g f \right).
 \end{eqnarray*}
 Expanding further we infer
  \begin{eqnarray*}
    \tmop{div}^{\Omega} \tmop{Tr}_g \left( \nabla_g A \cdot A \right) 
    & = & g \left( A e_k \nocomma, \nabla_{g, e_l} \nabla_{g, e_k} A \cdot
    e_l \right) + g \left( \nabla_{g, e_l} A \cdot e_k, \nabla_{g, e_k} A
    \cdot e_l \right)\\
    &  & \\
    & - & g \left( A e_k, \nabla_{g, e_k} A \cdot \nabla_g f \right)\\
    &  & \\
    & = & g \left( A e_k \nocomma, \nabla_{g, e_l} \widehat{\nabla_g A}
    \left( e_l, e_k \right) - \widehat{\nabla_g A} \left( \nabla_g f, e_k
    \right) \right)\\
    &  & \\
    & + & \left\langle \widehat{\nabla_g A}, \nabla_g A \right\rangle_g,
  \end{eqnarray*}
  and thus the identity (\ref{div-Tr}).
\subsection{The Maurer-Cartan equation in the K\"ahler case}
We observe that for any vector spaces $V$ and $E$, we can define a
contraction operation
\begin{eqnarray*}
  \neg \; : \; \left( \Lambda^p V^{\ast} \otimes V \right) \times \left(
  \Lambda^q V^{\ast} \otimes E \right) & \longrightarrow & \Lambda^{p + q - 1}
  V^{\ast} \otimes E\\
  &  & \\
  \left( \alpha, \beta \right) & \longmapsto & \alpha \; \neg \; \beta
 ,
\end{eqnarray*}
by the expression
\begin{eqnarray*}
  \left( \alpha \; \neg \; \beta \right) \left( \xi \right) & \assign &
  \sum_{|I| = \deg \alpha} \varepsilon_I \beta (\alpha (\xi_I),
  \xi_{\complement I})  .
\end{eqnarray*}
This map restricts to
\begin{eqnarray*}
  \neg \; : \; \mathcal{E}^{0, p} \left( T_X^{1, 0} \right) \times
  \mathcal{E}^{r, q} & \longrightarrow & \mathcal{E}^{r - 1, p + q}
  .
\end{eqnarray*}
We notice indeed the identity $\alpha \; \neg \; \beta \; = \;
\bar{\zeta}_I^{\ast} \wedge \left( \alpha_I \; \neg \; \beta \right)$, where
$\alpha = \alpha_I \otimes \bar{\zeta}_I^{\ast}$, with $\left( \zeta_k \right)_k \subset
  C^\infty ( U, T_{X, J}^{1, 0})$ a local frame. (We use from now on the Einstein convention for sums). 
  Obviously, the contraction operation $\neg $, generalises the one used in the previous sections.
\begin{lemma}{\bf(Expression of the exterior Lie product).}
  Let $\left( X, J, \omega \right)$ be a K\"ahler manifold and let
  $\alpha,
  \beta \in C^{\infty} (X, \Lambda^{0, \bullet}_J T^{\ast}_X
  \otimes_{\mathbbm{C}} T_{X, J}^{1, 0})$. Then holds the identity
  \begin{eqnarray*}
    {}[\alpha, \beta] & = & \alpha \neg \partial^{\omega}_{T^{1, 0}_{X, J}}
    \beta - \left( - 1 \right)^{\left| \alpha \right| \left| \beta \right|}
    \beta \neg \partial^{\omega}_{T^{1, 0}_{X, J}} \alpha .
  \end{eqnarray*}
\end{lemma}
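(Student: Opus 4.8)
The plan is to reduce the identity to a pointwise computation and then to invoke the torsion-freeness of the K\"ahler connection. Both sides are $\mathbbm{C}$-bilinear in $\left( \alpha, \beta \right)$ and each is a first order differential expression in its two arguments, so it suffices to verify the equality of the two $T^{1, 0}_{X, J}$-valued $\left( 0, | \alpha | + | \beta | \right)$-forms at an arbitrary point $p \in X$, where we are free to use adapted local coordinates. Writing $\alpha = \alpha_I \otimes \bar{\zeta}^{\ast}_I$ in a local frame $\left( \zeta_k \right)_k \subset C^{\infty} ( U, T^{1, 0}_{X, J})$, the identity $\alpha \neg \gamma = \bar{\zeta}^{\ast}_I \wedge ( \alpha_I \neg \gamma )$ recalled just before the statement shows that the contraction acts only through the vector parts $\alpha_I$. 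Since $\partial^{\omega}_{T^{1, 0}_{X, J}} \beta$ carries exactly one holomorphic form slot, the one produced by $\partial^{\omega}_{T^{1, 0}_{X, J}}$, the contraction feeds the $T^{1, 0}$-value $\alpha_I$ into that slot, so that $\alpha \neg \partial^{\omega}_{T^{1, 0}_{X, J}} \beta$ is, up to the wedge by $\bar{\zeta}^{\ast}_I$, the covariant derivative of $\beta$ in the complex direction $\alpha_I$.

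The second step is to write the left-hand side in a holomorphic coordinate frame. On any complex manifold the exterior Lie product admits, in a coordinate frame $\zeta_k = \partial_{z_k}$, the standard Kodaira-Spencer expression: with $\alpha = \alpha^k_{\bar{I}} d \bar{z}^I \otimes \partial_{z_k}$ and $\beta = \beta^k_{\bar{J}} d \bar{z}^J \otimes \partial_{z_k}$ one has
\[ [ \alpha, \beta ] = \left( \alpha^i_{\bar{I}} \partial_{z_i} \beta^k_{\bar{J}} \right) d \bar{z}^I \wedge d \bar{z}^J \otimes \partial_{z_k} - ( - 1 )^{| \alpha | | \beta |} \left( \beta^i_{\bar{J}} \partial_{z_i} \alpha^k_{\bar{I}} \right) d \bar{z}^J \wedge d \bar{z}^I \otimes \partial_{z_k}, \]
where $\partial_{z_i}$ differentiates the coefficient functions only. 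Here the commutation $[ \partial_{z_i}, \partial_{z_j} ] = 0$ of the holomorphic frame is used; it holds on any complex manifold, so the K\"ahler hypothesis is not yet needed.

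The K\"ahler condition enters only to identify $\partial^{\omega}_{T^{1, 0}_{X, J}}$ with ordinary $\partial$-differentiation at the base point. I would take the frame above to come from holomorphic normal coordinates centred at $p$, which exist precisely because $\omega$ is closed: then $\nabla_g \partial_{z_k} ( p ) = 0$ and the Chern-Christoffel symbols $\Gamma^k_{ij}$ vanish at $p$. Consequently $\partial^{\omega}_{T^{1, 0}_{X, J}} \beta ( p ) = \left( \partial_{z_i} \beta^k_{\bar{J}} \right) d z^i \wedge d \bar{z}^J \otimes \partial_{z_k}$, and feeding $\alpha_I$ into the $d z^i$ slot reproduces exactly the terms $\alpha^i_{\bar{I}} \partial_{z_i} \beta^k_{\bar{J}}$ of the displayed formula; the second contraction produces symmetrically the terms $\beta^i_{\bar{J}} \partial_{z_i} \alpha^k_{\bar{I}}$, with the matching graded sign $( - 1 )^{| \alpha | | \beta |}$. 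Comparing the two local expressions at $p$ yields the identity there, hence everywhere. Equivalently, without passing to normal coordinates, each contraction acquires an extra term in $\Gamma^k_{ij}$, and these cancel in the antisymmetrised difference precisely because the K\"ahler condition forces the symmetry $\Gamma^k_{ij} = \Gamma^k_{ji}$; this symmetry is exactly the torsion-freeness underlying the formula.

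I expect the only real difficulty to be the multi-index and sign bookkeeping: tracking the permutation signs $\varepsilon_I$ hidden in the definition of $\neg$, the ordering of the single $d z$-slot relative to the $d \bar{z}$-slots, and the reconciliation of the graded sign $( - 1 )^{| \alpha | | \beta |}$ with the antisymmetry built into $[ \alpha, \beta ]$. The genuine geometric content is confined to the single fact that the Chern connection of a K\"ahler metric is torsion-free; once this is used to replace $\partial^{\omega}_{T^{1, 0}_{X, J}}$ by $\partial$ at $p$, the remainder is the routine verification that the two explicit local expressions coincide.
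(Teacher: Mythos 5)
Your proposal is correct and follows essentially the same route as the paper: both reduce to a local holomorphic frame/coordinate computation and use the K\"ahler condition (geodesic holomorphic, i.e.\ normal, coordinates --- equivalently the symmetry of the Christoffel symbols) to identify $\partial^{\omega}_{T^{1,0}_{X,J}}$ with coefficientwise holomorphic differentiation at the base point. The paper merely organizes the same computation differently, first settling the degree-zero case in geodesic holomorphic coordinates and then reducing the general case to it via the local expressions $\alpha=\alpha_K\otimes\bar{\zeta}^{\ast}_K$ in a holomorphic frame; the sign and multi-index bookkeeping you defer is exactly what that reduction step carries out.
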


\begin{proof}
  In the case $\left| \alpha \right| = \left| \beta \right| = 0$, the identity
  follows from an elementary computation in geodesic holomorphic coordinates.
  In order to show the general case, let $\left( \zeta_k \right)_k \subset
  \mathcal{O} ( U, T_{X, J}^{1, 0})$ be a local frame. We consider
  the local expressions $\alpha = \alpha_K \otimes \bar{\zeta}^{\ast}_K$,
  $\beta = \beta_L \otimes_{} \bar{\zeta}_L^{\ast}$. Then
  \begin{eqnarray*}
    {}[\alpha, \beta] & = & [\alpha_K, \beta_L] \otimes \left(
    \bar{\zeta}^{\ast}_K \wedge \bar{\zeta}^{\ast}_L \right) \\
    &  & \\
    & = & \left( \alpha_K \neg \partial^{\omega}_{T^{1, 0}_{X, J}} \beta_L -
    \beta_L \neg \partial^{\omega}_{T^{1, 0}_{X, J}} \alpha_K \right) \otimes
    \left( \bar{\zeta}^{\ast}_K \wedge \bar{\zeta}^{\ast}_L \right) .
  \end{eqnarray*}
  The identity $\overline{\partial}_{T^{1, 0}_{X, J}} \zeta_k = 0$ implies
  $\partial_{_J}  \bar{\zeta}^{\ast}_K = 0$. We infer
  \begin{eqnarray*}
    \partial^{\omega}_{T^{1, 0}_{X, J}} \alpha & = & \partial^{\omega}_{T^{1,
    0}_{X, J}} \alpha_K \wedge \bar{\zeta}^{\ast}_K,
  \end{eqnarray*}
  and a similar local expression for $\beta$. Thus using the identity 
  $$
  \alpha
  \; \neg \; \gamma \; = \; \bar{\zeta}_K^{\ast} \wedge \left( \alpha_K \;
  \neg \; \gamma \right),
  $$
with $\gamma$ arbitrary, we deduce
  \begin{eqnarray*}
    \alpha \neg \partial^{\omega}_{T^{1, 0}_{X, J}} \beta & = & \left(
    \alpha_K \neg \partial^{\omega}_{T^{1, 0}_{X, J}} \beta_L \right) \otimes
    \left( \bar{\zeta}^{\ast}_K \wedge \bar{\zeta}^{\ast}_L \right),\\
    &  & \\
    \beta \neg \partial^{\omega}_{T^{1, 0}_{X, J}} \alpha & = & \left( \beta_L
    \neg \partial^{\omega}_{T^{1, 0}_{X, J}} \alpha_K \right) \otimes \left(
    \bar{\zeta}^{\ast}_L \wedge \bar{\zeta}^{\ast}_K \right)\\
    &  & \\
    & = & \left( - 1 \right)^{\left| \alpha \right| \left| \beta \right|}
    \left( \beta_L \neg \partial^{\omega}_{T^{1, 0}_{X, J}} \alpha_K \right)
    \otimes \left( \bar{\zeta}^{\ast}_K \wedge \bar{\zeta}^{\ast}_L \right),
  \end{eqnarray*}
  and thus the required conclusion.
\end{proof}

We deduce that over a K\"ahler manifold the Maurer-Cartan equation 
\begin{equation*}
  \overline{\partial}_{T^{1, 0}_{X, J_0}} \theta +
  \frac{1}{2}  \left[ \theta, \theta \right] = 0,
\end{equation*}
writes as
\begin{equation}
  \label{super-complexMCARTAN}  \overline{\partial}_{T^{1, 0}_{X, J}} \theta +
  \theta \neg \partial^{\omega}_{T^{1, 0}_{X, J}} \theta = 0 .
\end{equation}
We show below that we can rewrite the
Maurer-Cartan equation in equivalent real terms as
\begin{equation}
  \label{super-realMCARTAN}  \overline{\partial}_{T_{X, J}} \mu + \mu \neg
  \nabla^{1, 0}_{g, J} \mu = 0,
\end{equation}
or in more explicit terms
\begin{eqnarray*}
  \left( \mathbbm{I}+ \mu \right) \neg J \nabla_g \mu & = & \left(
  \mathbbm{I}+ \mu \right) J \neg \nabla_g \mu .
\end{eqnarray*}
In order to show (\ref{super-realMCARTAN}) we expand, for any $u, v \in T_X$,
the term
\begin{eqnarray*}
  \left( \theta \neg \partial^{\omega}_{T^{1, 0}_{X, J}} \theta \right) \left(
  u, v \right) & = & \partial^{\omega}_{T^{1, 0}_{X, J}} \theta \left( \theta
  u, v \right) + \partial^{\omega}_{T^{1, 0}_{X, J}} \theta \left( u, \theta v
  \right)\\
  &  & \\
  & = & \nabla^{1, 0}_{g, J} \theta \left( \theta u, v \right) - \nabla^{1,
  0}_{g, J} \theta \left( v, \theta u \right)\\
  &  & \\
  & + & \nabla^{1, 0}_{g, J} \theta \left( u, \theta v \right) - \nabla^{1,
  0}_{g, J} \theta \left( \theta v, u \right) .
\end{eqnarray*}
Expanding further we obtain
\begin{eqnarray*}
  2 \left( \theta \neg \partial^{\omega}_{T^{1, 0}_{X, J}} \theta \right)
  \left( u, v \right) & = & \nabla_g \theta \left( \theta u, v \right) - i
  \nabla_g \theta \left( J \theta u, v \right)\\
  &  & \\
  & - & \nabla_g \theta \left( v, \theta u \right) + i \nabla_g \theta \left(
  J v, \theta u \right)\\
  &  & \\
  & + & \nabla_g \theta \left( u, \theta v \right) - i \nabla_g \theta \left(
  J u, \theta v \right)\\
  &  & \\
  & - & \nabla_g \theta \left( \theta v, u \right) + i \nabla_g \theta \left(
  J \theta v, u \right) .
\end{eqnarray*}
Using the fact that {\tmem{$\theta$}} takes values in $T^{1, 0}_{X, J}$ we
obtain
\begin{eqnarray*}
  2 \left( \theta \neg \partial^{\omega}_{T^{1, 0}_{X, J}} \theta \right)
  \left( u, v \right) & = & 2 \nabla_g \theta \left( \theta u, v \right) -
  \nabla_g \theta \left( v, \theta u \right) + i \nabla_g \theta \left( J v,
  \theta u \right)\\
  &  & \\
  & - & 2 \nabla_g \theta \left( \theta v, u \right) + \nabla_g \theta \left(
  u, \theta v \right) - i \nabla_g \theta \left( J u, \theta v \right) .
\end{eqnarray*}
Replacing on the right hand side of this equality the identity $2 \theta = \mu
- i J \mu$ and adding the conjuguate of both sides we infer
\begin{eqnarray*}
 & & 8 \left( \theta \neg \partial^{\omega}_{T^{1, 0}_{X, J}} \theta \right)
  \left( u, v \right) + 8 \overline{\left( \theta \neg
  \partial^{\omega}_{T^{1, 0}_{X, J}} \theta \right) \left( u, v \right)} \\
  &  & \\
  & =
  & 4 \nabla_g \mu \left( \mu u, v \right) - 4 J \nabla_g \mu \left( J \mu u,
  v \right)\\
  &  & \\
  & - & 2 \nabla_g \mu \left( v, \mu u \right) + 2 J \nabla_g \mu \left( v, J
  \mu u \right)\\
  &  & \\
  & + & 2 \nabla_g \mu \left( J v, J \mu u \right) + 2 J \nabla_g \mu \left(
  J v, \mu u \right)\\
  &  & \\
  & + & 2 \nabla_g \mu \left( u, \mu v \right) - 2 J \nabla_g \mu \left( u, J
  \mu u \right)\\
  &  & \\
  & - & 2 \nabla_g \mu \left( J u, J \mu v \right) - 2 J \nabla_g \mu \left(
  J u, \mu v \right)\\
  &  & \\
  & - & 4 \nabla_g \mu \left( \mu v, u \right) + 4 J \nabla_g \mu \left( J
  \mu v, u \right) .
\end{eqnarray*}
Using the anti $J$-linearity of $\nabla_{g, \xi} \mu$ we deduce
\begin{eqnarray*}
& &  8 \left( \theta \neg \partial^{\omega}_{T^{1, 0}_{X, J}} \theta \right)
  \left( u, v \right) + 8 \overline{\left( \theta \neg
  \partial^{\omega}_{T^{1, 0}_{X, J}} \theta \right) \left( u, v \right)} \\
  &  & \\
  & = & 4 \nabla_g \mu \left( \mu u, v \right) - 4 J \nabla_g \mu \left( J \mu u,
  v \right)\\
  &  & \\
  & - & 4 \nabla_g \mu \left( \mu v, u \right) + 4 J \nabla_g \mu \left( J
  \mu v, u \right) \\
  &  & \\
  & = & 8 \nabla^{1, 0}_{g, J} \mu \left( \mu u, v \right) - 8 \nabla^{1,
  0}_{g, J} \mu \left( \mu v, u \right)\\
  &  & \\
  & = & 8 \left( \mu \neg \nabla^{1, 0}_{g, J} \mu \right) \left( u, v
  \right) .
\end{eqnarray*}
The latter combined with
\begin{eqnarray*}
  \overline{\partial}_{T^{1, 0}_{X, J}} \theta \left( u, v \right) +
  \overline{\overline{\partial}_{T^{1, 0}_{X, J}} \theta \left( u, v \right)}
  & = & \overline{\partial}_{T_{X, J}} \mu \left( u, v \right),
\end{eqnarray*}
and (\ref{super-complexMCARTAN}) implies the required identity
(\ref{super-realMCARTAN}).

\vspace{1cm}
\noindent
Nefton Pali
\\
Universit\'{e} Paris Sud, D\'epartement de Math\'ematiques 
\\
B\^{a}timent 425 F91405 Orsay, France
\\
E-mail: \textit{nefton.pali@math.u-psud.fr}

\end{document}